\numberwithin{equation}{section}  
\newtheorem{punkt}{}[section]
\theoremstyle{plain}
\newtheorem{lemma}[punkt]{Lemma}
\newtheorem{theorem}[punkt]{Theorem}
\newtheorem{conjecture}[punkt]{Conjecture}
\theoremstyle{definition}
\theoremstyle{plain}
\newtheorem*{corollary*}{Corollary}
\newtheorem*{lemma*}{Lemma}
\newtheorem*{proposition*}{Proposition}
\newtheorem*{theorem*}{Theorem}
\newtheorem*{conjecture*}{Conjecture}
\theoremstyle{definition}
\newtheorem*{remark*}{Remark}
\newtheorem*{remarks*}{Remarks}
\newtheorem*{example*}{Example}
\newtheorem*{examples*}{Examples}
\newtheorem*{definition*}{Definition}
\newtheorem*{assumption*}{Assumption}
\newtheorem*{assumptions*}{Assumptions}
\newtheorem*{construction*}{Construction}
\def\mylebesgue{\lambda \mskip -8mu \lambda}
\def\myo{\mathcal{O}}
\def\myp{\mathcal{P}}
\def\mys{\mathcal{S}}
\def\re{\qopname\relax{no}{Re}\,}
\def\eg{e.g.\@\xspace}
\def\ie{i.e.\@\xspace}
\begin{document}


\title{On the Occurrence of the Sine Kernel \\[+5pt] in Connection with the Shifted Moments \\[+5pt] of the Riemann zeta function}

\author{H. K\"osters}
\address{Holger K\"osters, Fakult\"at f\"ur Mathematik, Universit\"at Bielefeld,
Postfach 100131, 33501 Bielefeld, Germany}
\email{hkoesters@math.uni-bielefeld.de}

\date{August 31, 2008}

\begin{abstract}
We point out an interesting occurrence of the sine kernel
in connection with the shifted moments 
of the Riemann zeta function along the critical line. 
We discuss rigorous results in this direction
for the shifted second moment and for the shifted fourth moment.
%
%
Furthermore, we conjecture that the sine kernel
also occurs in connection with the higher (even) shifted moments
and show that this conjecture is closely related to a recent conjecture
by \textsc{Conrey}, \textsc{Farmer}, \textsc{Keating},
\textsc{Rubinstein}, and \textsc{Snaith} \cite{CFKRS1,CFKRS2}.
\end{abstract}


\maketitle

\markboth{H. K\"osters}{The Riemann Zeta Function and The Sine Kernel}

\bigskip

\def\fc{f_{\text{CUE}}}
\def\fg{f_{\text{GUE}}}

\section{Introduction}

Since the discovery by Montgomery and Dyson that
the pair correlation function of the non-trivial zeros 
of the Riemann zeta function seems to be asymptotically
the~same~as~that of the eigenvalues of a random matrix 
from the Gaussian Unitary Ensemble (GUE),
the relationship between the theory of the Riemann zeta function and 
the theory of random matrices has attracted considerable interest.
This interest intensified in the last few years
after \textsc{Keating} and \textsc{Snaith} \cite{KS1}
compared the~moments of the characteristic polynomial
of a random matrix from the Circular Unitary Ensemble (CUE)
with the -- partly conjectural -- moments of the~value distribution 
of the Riemann zeta function along the critical line, 
and also found some striking similarities.
These findings have sparked intensive further research.
On~the one hand, there are now~a~number of new conjectures,
derived from random matrix theory, 
about the moments~of the value distribution 
of the Riemann zeta function and more general $L$-functions
(see the papers by \textsc{Keating} and \textsc{Snaith} \cite{KS1,KS2}
as well as \textsc{Conrey}, \textsc{Farmer}, \textsc{Keating}, 
\textsc{Rubinstein}, and \textsc{Snaith} \cite{CFKRS1,CFKRS2,CFKRS3}
and the references contained therein).
On the other hand, various authors have investigated 
the moments and the correlation functions of the characteristic polynomial
also for other random matrix ensembles
(see \eg \textsc{Br\'ezin} and \textsc{Hikami} \cite{BH1,BH2},
\textsc{Mehta} and \textsc{Normand} \cite{MN},
\textsc{Strahov} and \textsc{Fyodorov} \cite{SF3},
\textsc{G\"otze} and \textsc{K\"osters} \cite{GK}).

A recurring phenomenon on the random matrix side 
is the emergence of the sine~kernel in the asymptotics
of the correlation functions (or shifted moments)
of the characteristic polynomial.
For instance, for the Circular Unitary Ensemble (CUE)
(see \textsc{Forrester} \cite{Fo} or \textsc{Mehta} \cite{Me}),
the second-order correlation function of the characteristic polynomial
$$
   \fc(N,\mu,\nu) 
:= \int_{\mathcal{U}_N} \det(U - \mu I) \, \overline{\det(U - \nu I)} \ dU
$$
(where $I$ denotes the $N \times N$ identity matrix
and integration is with respect to the normalized Haar measure
on the group $\mathcal{U}_N$ of $N \times N$ unitary matrices)
satisfies
\begin{align}
\label{cue1}
\lim_{N \to \infty} \frac{1}{N} \cdot \fc \left( N; e^{2\pi i\mu/N}, e^{2\pi i\nu/N} \right) 
= 
e^{\pi i(\mu-\nu)} \cdot \frac{\sin \pi(\mu-\nu)}{\pi(\mu-\nu)}
\end{align}
for any $\mu,\nu \in \mathbb{R}$.
This can be deduced using standard arguments from random matrix theory
(see \eg Chapter 4 in \textsc{Forrester} \cite{Fo}).
More generally, using similar arguments, it can be shown that for any $M \geq 1$,
the correlation function of order $2M$ of the characteristic polynomial
$$
   \fc(N,\mu_1,\hdots,\mu_M,\nu_1,\hdots,\nu_M) 
:= \int_{\mathcal{U}_N} \prod_{j=1}^{M} \det(U - \mu_j I) \, \overline{\det(U - \nu_j I)} \ dU
$$
satisfies
\begin{multline}
\label{cue2}
\lim_{N \to \infty} \frac{1}{N^{M^2}} \cdot \fc \left( N; e^{2\pi i\mu_1/N},\hdots,e^{2\pi i\mu_M/N}, e^{2\pi i\nu_1/N},\hdots,e^{2\pi i\nu_M/N} \right) 
\\ = 
\frac{\exp(\sum_{j=1}^{M} \pi i(\mu_j-\nu_j))}{\Delta(2\pi\mu_1,...,2\pi\mu_M) \cdot {\Delta(2\pi\nu_1,...,2\pi\nu_M)}} \cdot \det \left( \frac{\sin \pi(\mu_j-\nu_k)}{\pi(\mu_j-\nu_k)} \right)_{j,k=1,\hdots,M}
\end{multline}
for any pairwise different $\mu_1,\hdots,\mu_M,\nu_1,\hdots,\nu_M \in \mathbb{R}$,
where $\Delta(x_1,\hdots,x_M) := \prod_{1 \leq j<k \leq M} (x_k - x_j)$
denotes the Vandermonde determinant.
For completeness, the proof of (\ref{cue2})
is sketched in Appendix~B of this paper.

Similarly, for the Gaussian Unitary Ensemble (GUE)
(see \textsc{Forrester} \cite{Fo} or \textsc{Mehta} \cite{Me}),
the second-order correlation function of the characteristic polynomial
$$
   \fg(N,\mu,\nu) 
:= \int_{\mathcal{H}_N} \det(X - \mu I) \, \det(X - \nu I) \ \mathbb{Q}(dX)
$$
(where $I$ denotes the $N \times N$ identity matrix
and $\mathbb{Q}$ denotes the Gaussian Unitary Ensemble on the space 
$\mathcal{H}_N$ of $N \times N$ Hermitian matrices)
satisfies
\begin{align}
\label{gue1}
\lim_{N \to \infty} \sqrt{\frac{\pi}{2N}} \cdot \frac{2^N}{N!} \cdot \fg \left( N; \frac{\pi\mu}{\sqrt{2N}}, \frac{\pi\nu}{\sqrt{2N}} \right) 
= 
\frac{\sin \pi(\mu-\nu)}{\pi(\mu-\nu)}
\end{align}
for any $\mu,\nu \in \mathbb{R}$
(see \eg Chapter 4 in \textsc{Forrester} \cite{Fo}).
Also, an analogue of~(\ref{cue2}) holds as~well.
Even more, these results can be generalized both 
to the class of unitary-invariant matrix ensembles
(\textsc{Br\'ezin} and \textsc{Hikami} \cite{BH1},
\textsc{Mehta} and \textsc{Normand} \cite{MN},
\textsc{Strahov} and \textsc{Fyodorov} \cite{SF3})
and -- at least for the second-order correlation function --
to the class of Hermitian Wigner ensembles
(\textsc{G\"otze}~and~\textsc{K\"osters} \cite{GK}).
In particular, it is noteworthy 
that the emergence of the sine kernel is universal,
as it occurs in all the cases pre\-viously mentioned,
irrespective of the particular details of the~definition 
of the random matrix ensemble.
(More~precisely, the emergence of the sine kernel
depends on the symmetry class of the random matrix ensemble. 
For instance, for the Gaussian Orthogonal Ensemble (GOE)
on the space of real symmetric matrices,
the asymptotics are different;
see \textsc{Br\'ezin} and \textsc{Hikami} \cite{BH2}.)

In view of the above-mentioned similarities
between random matrices and the Riemann zeta function,
it seems natural to ask whether there is an analogue 
of (\ref{cue1})~and~(\ref{cue2}) for the shifted moments 
of the Riemann zeta function along the critical line.
Although we have the feeling that such analogues should be \mbox{well-known},
we have not been able to find an explicit statement of such analogues
in the (extensive!) recent literature on the relationship between 
the theory of random matrices and the theory of the Riemann zeta function.
This seems somewhat surprising, particularly since 
there exist some more general results (or conjectures)
from which an analogue of (\ref{cue1})~or~(\ref{cue2}) 
could be deduced rather easily, at least on a formal level.
The main aim of this note is to fill this gap.

More precisely, by an analogue of (\ref{cue1}) and (\ref{gue1}), 
we mean a result of the form
\begin{multline*}
\lim_{T \to \infty} \frac{1}{C(T)} \int_{T_0}^{T} \zeta \Big( \tfrac{1}{2} + i \Big( t + \tfrac{2\pi\mu}{\log t} \Big) \Big) \, \zeta \Big( \tfrac{1}{2} - i \Big( t + \tfrac{2\pi\nu}{\log t} \Big) \Big) \ dt
=
e^{\pm i\pi(\mu-\nu)} \cdot \frac{\sin \pi(\mu-\nu)}{\pi(\mu-\nu)} \,,
\end{multline*}
where $\mu$ and $\nu$ are arbitrary real numbers, $T_0 > 1$ is a constant,
and $C(T)$ is some normalizing factor depending on $T$.
To account for our choice of scaling for the shift parameters $\mu$~and~$\nu$,
note that both in (\ref{cue1}) and in (\ref{gue1}),
the scaling factor is equal to the mean spacing of eigenvalues.
For~instance, for a random $N \times N$ matrix from the~CUE,
there~are $N$ eigenvalues distributed over the unit circle 
of length~$2\pi$, which gives rise to a~mean spacing of $2\pi/N$.
Similarly, for a random $N \times N$ matrix from the~GUE, 
it~is well-known that the mean spacing at the origin is $\pi/\sqrt{2N}$ 
(see \eg Chapter~6 in \textsc{Mehta}~\cite{Me}). 
Now recall that, if $N(T)$ denotes the number of zeros of $\zeta(\sigma + it)$ 
in the region $0 \leq \sigma \leq 1$, $0 \leq t \leq T$,
it is known that $N(T) \sim (2\pi)^{-1} \, T \, \log T$
(see \eg Chapter~9 in \textsc{Titchmarsh}~\cite{Ti}), so that 
the \emph{empirical} mean spacing at location $t$ is $\sim 2\pi/\log t$.
Since this mean spacing depends on $t$,
it seems natural to multiply the shift parameters $\mu$~and~$\nu$
by~the \emph{location-dependent} scaling factor $2\pi/\log t$.

For the shifted second moment of the Riemann zeta function,
this result was obtained (in a slightly different formulation)
already by \textsc{Atkinson} \cite{At} in 1948,
and thus even before the sine kernel was ``discovered''
in random matrix theory. (Curiously, this paper seems 
not to get cited in the recent literature on the interplay 
between random matrix theory and number theory.)
\textsc{Atkinson}'s theorem can be restated as follows:

\begin{theorem}
\label{zeta2theorem}
For any $T_0 > 1$ and any $\mu,\nu \in \mathbb{R}$,
$$ 
\lim_{T \to \infty} \frac{1}{T \log T} \int_{T_0}^{T} \zeta \Big( \tfrac{1}{2} + i \Big( t + \tfrac{2\pi\mu}{\log t} \Big) \Big) \, \zeta \Big( \tfrac{1}{2} - i \Big( t + \tfrac{2\pi\nu}{\log t} \Big) \Big) \ dt
\\ =
e^{-i\pi(\mu-\nu)} \cdot \mathbb{S}(\pi(\mu-\nu)) \,,
$$ 
where $\mathbb{S}(x) := \sin x / x$ for $x \ne 0$
and $\mathbb{S}(x) := 1$ for $x = 0$.
\end{theorem}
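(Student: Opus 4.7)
The plan is to deduce Theorem~\ref{zeta2theorem} from the classical Ingham--Atkinson asymptotic formula for the shifted second moment of $\zeta$ with \emph{fixed} complex shifts, and then to extract the claimed expression by inserting the Laurent expansion of $\zeta$ at its pole. The starting point, used as a black box, is a statement of the shape
\begin{equation*}
\int_1^T \zeta\bigl(\tfrac12 + \alpha + it\bigr)\, \zeta\bigl(\tfrac12 + \beta - it\bigr)\,dt \;=\; \int_1^T \Bigl( \zeta(1+\alpha+\beta) + \bigl(\tfrac{t}{2\pi}\bigr)^{-\alpha-\beta} \zeta(1-\alpha-\beta) \Bigr)\,dt \,+\, O\bigl(T^{1/2+\epsilon}\bigr),
\end{equation*}
valid uniformly for $\alpha,\beta$ in a bounded subset of $\mathbb{C}$ bounded away from singularities (Atkinson~\cite{At}, Ingham).

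The first step is to reduce the integral in the theorem, whose shifts $\alpha(t) := i\cdot 2\pi\mu/\log t$ and $\beta(t) := -i\cdot 2\pi\nu/\log t$ depend on $t$, to the fixed-shift setting. I would dyadically decompose the interval, say $I_k = [2^k T_0,\, 2^{k+1} T_0]$ for $0 \le k \le K$ with $2^K T_0 \asymp T$, and apply the Ingham--Atkinson formula on each $I_k$ (by taking differences at the endpoints) with the constant shifts $\alpha_k := \alpha(2^{k+1}T_0)$ and $\beta_k := \beta(2^{k+1}T_0)$. The error from freezing the shifts can be controlled by combining the trivial estimate $|\alpha(t) - \alpha_k| = O(1/k^2)$ on $I_k$ with the mean-square bounds $\int_T^{2T}|\zeta(\tfrac12+it)|^2\,dt = O(T\log T)$ and $\int_T^{2T}|\zeta'(\tfrac12+it)|^2\,dt = O(T\log^3 T)$ via Cauchy--Schwarz. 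The $O(T^{1/2+\epsilon})$ errors summed over the $O(\log T)$ dyadic pieces are $o(T\log T)$, so this step yields
\begin{equation*}
\int_{T_0}^T \zeta\bigl(\tfrac12 + i(t + \tfrac{2\pi\mu}{\log t})\bigr)\, \zeta\bigl(\tfrac12 - i(t + \tfrac{2\pi\nu}{\log t})\bigr)\,dt \;=\; \int_{T_0}^T M(t)\,dt + o(T \log T),
\end{equation*}
where $M(t) := \zeta\bigl(1 + \tfrac{2\pi i(\mu-\nu)}{\log t}\bigr) + \bigl(\tfrac{t}{2\pi}\bigr)^{-2\pi i(\mu-\nu)/\log t}\, \zeta\bigl(1 - \tfrac{2\pi i(\mu-\nu)}{\log t}\bigr)$ is the main term with the variable shifts reinserted.

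The second step is a direct asymptotic evaluation of $M(t)$. Using $\zeta(1+s) = 1/s + \gamma + O(s)$ near $s = 0$ together with $(t/2\pi)^{-2\pi i(\mu-\nu)/\log t} = e^{-2\pi i(\mu-\nu)}\bigl(1 + O(1/\log t)\bigr)$, the two polar parts combine, after cancellation of the separate $1/s$ singularities, to give
\begin{equation*}
M(t) \;=\; \frac{1 - e^{-2\pi i(\mu-\nu)}}{2\pi i(\mu-\nu)}\, \log t \;+\; O(1),
\end{equation*}
with the case $\mu = \nu$ understood as the obvious limit ($M(t) = \log t + O(1)$, recovering the classical second-moment asymptotic). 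Integrating against $dt$, dividing by $T\log T$, and using $\int_{T_0}^T \log t\,dt \sim T\log T$, the limit equals $(1 - e^{-2\pi i(\mu-\nu)})/(2\pi i(\mu-\nu))$, which by the elementary identity $1 - e^{-2ix} = 2i\,e^{-ix}\sin x$ with $x = \pi(\mu-\nu)$ coincides with $e^{-i\pi(\mu-\nu)}\,\mathbb{S}(\pi(\mu-\nu))$, as claimed.

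The main technical obstacle is the first step: one needs a version of Ingham's formula with uniform error in the shifts, together with a quantitative control of the perturbation incurred by freezing the shifts on each dyadic piece. Both are handled by Atkinson's original treatment, and the restriction $T_0 > 1$ ensures that the perturbations $2\pi\mu/\log t$ and $2\pi\nu/\log t$ remain bounded throughout the range of integration, keeping the relevant values of $\zeta(1 + \cdot)$ in a region where the Laurent expansion is applicable.
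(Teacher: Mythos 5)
Your argument is correct, but it is not the route the paper takes. The paper establishes Theorem~\ref{zeta2theorem} by Atkinson's direct method: one starts from the approximate functional equation for $\zeta(s)$, writes each factor as a Dirichlet polynomial plus a $\chi$-factor term, and then estimates the resulting diagonal and off-diagonal sums with the exponential-integral lemmas (Lemmas~\ref{bound1}, \ref{bound2}) and the divisor-sum lemmas of Appendix~A --- exactly the scheme carried out in Section~2 for the fourth moment. You instead deduce the theorem from the constant-shift mean value theorem of Ingham--Atkinson, which is precisely the alternative route the paper attributes to its referee and deliberately does not pursue. Your version is shorter and makes the origin of the sine kernel transparent (it drops out of combining the two polar terms $\zeta(1\pm s)$ with the oscillating factor $(t/2\pi)^{-s}$ at $s = 2\pi i(\mu-\nu)/\log t$), and your dyadic freezing of the shifts, with the error controlled by $|\alpha(t)-\alpha_k| = O(1/k^2)$ against the mean-square bounds for $\zeta$ and $\zeta'$, does give $O(T_k)$ per block and hence $o(T\log T)$ in total; the endpoint algebra $\bigl(1-e^{-2\pi i(\mu-\nu)}\bigr)/\bigl(2\pi i(\mu-\nu)\bigr) = e^{-i\pi(\mu-\nu)}\,\mathbb{S}(\pi(\mu-\nu))$ is also right. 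The one point you should not gloss over is the black box itself: Ingham's Theorem~A is stated for \emph{fixed} shifts, and what you need is an error term $O(T^{1/2+\epsilon})$ that is \emph{uniform} for purely imaginary shifts of size $O(1/\log T)$. That uniform version is true, but it is essentially the content of Atkinson's paper rather than a formal consequence of the fixed-shift statement, so citing it carefully (or re-deriving the uniformity) is where the real work of your approach is hidden; by contrast, the paper's direct method avoids this issue at the cost of redoing the Dirichlet-polynomial analysis, and has the advantage of extending to the fourth moment, which is the paper's main objective.
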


In particular, for $\mu,\nu = 0$, this reduces to the classical result that
$$
\lim_{T \to \infty} \frac{1}{T \log T} \int_{0}^{T} \left| \zeta \Big( \tfrac{1}{2} + i t \Big) \right|^2 \ dt
= 
1
$$
(see \eg Theorem~7.3 in \textsc{Titchmarsh} \cite{Ti}).

\pagebreak[3]

Actually, \textsc{Atkinson}'s theorem states that for any $\alpha \geq 0$,
$$
\int_{T_0}^{T} \zeta \Big( \tfrac{1}{2} + i u(t) \Big) \, \zeta \Big( \tfrac{1}{2} - i t \Big) \ dt
\sim
e^{-i u} \cdot \mathbb{S}(u) \cdot T \log T 
\qquad
(T \to \infty)\,,
$$
where $u(t)$ is defined by the relation $\vartheta(u(t)) - \vartheta(t) \!=\! \alpha$,
with $\vartheta(t) \!:=\! - \tfrac{1}{2} \arg \chi (\tfrac{1}{2} + it)$.
However, as $u(t) - t \sim 2\alpha / \log t$ ($t \to \infty$),
it seems clear that Theorem \ref{zeta2theorem} is virtually the~same,
and in fact this result can be established by the same proof
as \textsc{Atkinson}'s theorem. 

For the shifted fourth moment of the Riemann zeta function,
we have the following result, which constitutes
an analogue of (\ref{cue2}) in the special case
$M := 2$, $\mu_1 = \nu_1 =: \mu$, $\mu_2 = \nu_2 =: \nu$:

\begin{theorem}
\label{zeta4theorem}
For any $T_0 > 1$ and any $\mu,\nu \in \mathbb{R}$,
$$
\lim_{T \to \infty} \frac{1}{T (\log T)^4} \int_{T_0}^{T} \Big| \zeta \Big( \tfrac{1}{2} + i \Big( t + \tfrac{2\pi\mu}{\log t} \Big) \Big) \Big|^2 \, \Big| \zeta \Big( \tfrac{1}{2} + i \Big( t + \tfrac{2\pi\nu}{\log t} \Big) \Big) \Big|^2 \ dt
\\ =
\frac{3}{2\pi^2} \cdot \mathbb{T}(\pi(\mu-\nu)) ,
$$
where $\mathbb{T}(x) := \frac{1}{x^2} \left( 1 - \left( \frac{\sin x}{x} \right)^2 \right)$
for $x \ne 0$ and $\mathbb{T}(0) := 1/3$ for $x = 0$.
\end{theorem}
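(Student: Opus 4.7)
The plan is to deduce Theorem~\ref{zeta4theorem} from the known asymptotic formula for the shifted fourth moment of $\zeta$ on the critical line. This formula originates with \textsc{Ingham} and was sharpened by \textsc{Heath-Brown}, \textsc{Motohashi}, and others; in the case $k = 2$ it coincides with the main term prescribed by the CFKRS recipe alluded to in the introduction, and is a theorem in this case. Setting $L := \log(t/2\pi)$, one has, uniformly in real shifts $\alpha, \beta$ with $|\alpha|, |\beta| \leq C/\log T$, an asymptotic of the form
\[
\int_{T_0}^T \Big|\zeta\Big(\tfrac{1}{2} + i(t+\alpha)\Big)\Big|^2 \, \Big|\zeta\Big(\tfrac{1}{2} + i(t+\beta)\Big)\Big|^2 \, dt
 = \int_{T_0}^T W(\alpha-\beta; t) \, dt + o(T(\log T)^4),
\]
where $W(\eta; t)$ is the explicit main term attached to the four shifts $(i\alpha, i\beta, -i\alpha, -i\beta)$; by symmetry it depends on $\alpha, \beta$ only through $\eta := \alpha - \beta$.

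The main step is to simplify $W(\eta; t)$. It is a sum of pieces coming from the ``swaps'' in the CFKRS recipe; each piece is rational in the shifts, multiplied by $(t/2\pi)^{\text{shift sum}}$ and by a product of values of $\zeta$ near $s = 1$ divided by $\zeta(2)$. Each summand has polar singularities at $\eta = 0$ arising from the simple pole of $\zeta$ at $s=1$, but the total sum is entire in $\eta$ and reduces at $\eta = 0$ to the classical Ingham polynomial with leading term $L^4/(2\pi^2)$. A direct Laurent expansion around $\eta = 0$, using $\zeta(1+s) = 1/s + \gamma + O(s)$ and $(t/2\pi)^{\pm i\eta} = e^{\pm i\eta L}$, collects the cancellations into the identity
\[
W(\eta; t) = \frac{3}{2\pi^2} \, L^4 \, \mathbb{T}(\eta L/2) + O(L^3),
\]
which can be checked at $\eta = 0$ via $\mathbb{T}(0) = 1/3$, and more generally via the leading $L^2$-coefficient of $W$ for fixed $\eta \ne 0$ (matching the expected constant $\tfrac{6}{\pi^2}|\zeta(1+i\eta)|^2 \sim \tfrac{6}{\pi^2 \eta^2}$, consistent with $L^4 \mathbb{T}(\eta L / 2) \sim 4L^2/\eta^2$ in the same limit).

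Finally, substituting $\eta = 2\pi(\mu - \nu)/\log t$ gives $\eta L/2 = \pi(\mu - \nu)(1 + o(1))$ uniformly in $t \in [T_0, T]$, so by continuity of $\mathbb{T}$ the integrand equals $\tfrac{3}{2\pi^2} (\log t)^4 \, \mathbb{T}(\pi(\mu - \nu))\,(1 + o(1))$; integrating and dividing by $T(\log T)^4$ yields the stated limit.

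The main obstacle is the algebraic identification of $W(\eta; t)$ with $\tfrac{3}{2\pi^2} L^4 \, \mathbb{T}(\eta L/2)$ modulo $O(L^3)$: the polar structure in $\eta$ is intricate and the required cancellations are not transparent from the raw swap-sum; the emergence of $\mathbb{T}$ essentially mirrors the confluent limit of the CUE formula~(\ref{cue2}) with $M = 2$, specialised to $\mu_1 = \nu_1$ and $\mu_2 = \nu_2$, up to the arithmetic factor $1/\zeta(2) = 6/\pi^2$. A secondary, more technical issue is uniformity of the shifted fourth-moment formula in shifts of size $O(1/\log T)$, which is available via the refinements of \textsc{Heath-Brown} or \textsc{Motohashi}.
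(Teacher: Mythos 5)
Your route is genuinely different from the paper's. The paper gives a direct, self-contained proof in the style of Ingham: it starts from the approximate functional equation for $\zeta^2(s)$, writes each factor $|\zeta(\tfrac12+it+\tfrac{2\pi i\lambda}{\log t})|^2$ as $2\re S(\lambda,t)+\mathcal{O}(\log t)$ for an explicit Dirichlet polynomial $S$, kills the off-diagonal and oscillatory terms with the first- and second-derivative tests (Lemmas \ref{bound1}, \ref{bound2}) and the divisor sums of Lemmas \ref{sum4} and \ref{d2dsum}, and extracts the sine kernel from the diagonal sum $\sum d(n)^2 n^{-1+i\lambda}$ via an Abel-summation/Laplace-type integral $\int_0^1 w^3 e^{i\lambda w\log T}\,dw$. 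You instead propose to quote a uniform shifted fourth-moment theorem (Heath-Brown/Motohashi) and simplify its main term. The paper itself concedes (crediting a referee) that your route "should be possible" and would even yield lower-order terms; its stated reason for not taking it is that the direct argument is simpler and makes the emergence of the sine kernel transparent. So the strategy is legitimate, but as written your proposal has real gaps rather than a complete proof.

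Concretely: (i) The identity $W(\eta;t)=\tfrac{3}{2\pi^2}L^4\,\mathbb{T}(\eta L/2)+\mathcal{O}(L^3)$, uniformly for $|\eta|\le C/L$, is the entire content of the deduction, and you assert it rather than prove it, offering only consistency checks at $\eta=0$ and $\eta L\to\infty$. The identity is in fact correct -- it is exactly the $M=2$ case of the Laurent-expansion computation the paper performs heuristically in Section~3, combined with the determinantal identity (\ref{cue6}) of Appendix~B -- but carrying out the cancellation of the $1/\eta^4$, $1/\eta^3$ poles among the six swap terms uniformly in $\eta\asymp 1/L$ is precisely the work that needs to be done. (ii) Your input theorem must be uniform in shifts of size $1/\log T$ with an error $o(T\log^4T)$; Ingham's Theorem~A does not give this, so you are really leaning on Motohashi's Theorem~4.2, and the uniformity claim should be checked against its actual hypotheses rather than cited generically. (iii) Most importantly, the theorem's shifts $2\pi\mu/\log t$ are $t$-dependent, whereas any quoted moment formula is for \emph{constant} shifts; you silently substitute $\eta=2\pi(\mu-\nu)/\log t$ into a constant-shift main term. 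Bridging this requires an extra argument -- e.g.\ work on dyadic blocks $[T,2T]$, replace $2\pi\mu/\log t$ by $2\pi\mu/\log T$ (a perturbation of size $\mathcal{O}(1/\log^2T)$), and control the resulting error via Cauchy--Schwarz and a mean-value bound for $\zeta'$ on the critical line. This step is absent, and it is not cosmetic: pointwise, $\zeta(\tfrac12+iu)$ is not uniformly continuous on the scale $1/\log^2 T$. The paper avoids the issue entirely because its Dirichlet-polynomial representation carries the $t$-dependent shift explicitly through the computation.
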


In particular, for $\mu,\nu = 0$, this reduces to the classical result that
$$
\lim_{T \to \infty} \frac{1}{T (\log T)^4} \int_{0}^{T} \Big| \zeta \Big( \tfrac{1}{2} + it \Big) \Big|^4 \ dt
\\ =
\frac{1}{2\pi^2}
$$
(see Theorem~B in \textsc{Ingham} \cite{In}).

For the proof of Theorem \ref{zeta4theorem}, we will closely follow 
the proof of Theorem~B in \textsc{Ingham} \cite{In}. In particular, 
our proof is also based on the approximate functional equation for $\zeta(s)^2$.
(This is analogous to the proof of Theorem \ref{zeta2theorem}
indicated above, which closely follows the proof
of the corresponding result for the non-shifted second moment,
starting from the approximate functional equation for $\zeta(s)$.)

As pointed out by an anonymous referee, it should also be possible
to deduce Theorems \ref{zeta2theorem} and \ref{zeta4theorem}
(and even more precise versions involving information about the lower-order terms)
from the existing (more general) mean value theorems 
for the second and fourth moment of the Riemann zeta~function 
with \emph{constant} shifts
(see Theorem~A in \textsc{Ingham} \cite{In}
and Theorem~4.2 in \textsc{Motohashi} \cite{Mot}).
However, we will not pursue this issue further here,
since it is our main aim to point out that the highest-order terms
of the appropriately shifted moments of the Riemann zeta~function
give rise to the sine kernel.
Furthermore, weighing the shifts with the~factor $2\pi / \log t$
seems to simplify the problem, and we therefore think 
that our~independent (and comparatively simple) proof
of Theorem \ref{zeta4theorem} might be of its~own~interest.

As regards the higher (even) shifted moments 
of the Riemann zeta function along the critical line,
we will show that a recent conjecture
by \textsc{Conrey}, \textsc{Farmer}, \textsc{Keating}, 
\textsc{Rubinstein}, and \textsc{Snaith} \cite{CFKRS2},
when combined with our choice of scaling,
gives rise to the following analogue of (\ref{cue2}):

\begin{conjecture}
\label{skc}
For any $M=1,2,3,\hdots$, for any $T_0 > 1$ and for any $\mu_1,\hdots,\mu_M, \linebreak[2] \nu_1,\hdots,\nu_M \in \mathbb{R}$,
\begin{multline*}
\lim_{T \to \infty}
\frac{1}{T (\log T)^{M^2}}
\int_{T_0}^{T} 
\prod_{j=1}^{M} \zeta \left(\tfrac{1}{2} + it + \tfrac{2\pi i\mu_j}{\log t} \right)
\prod_{j=1}^{M} \zeta \left(\tfrac{1}{2} - it - \tfrac{2\pi i\nu_j}{\log t} \right)
\ dt
\\
= a_M \cdot \frac{\exp ( -\pi i \sum_{j=1}^{M} (\mu_j - \nu_j) )}{\Delta(2\pi\mu_1,...,2\pi\mu_M) \cdot {\Delta(2\pi\nu_1,...,2\pi\nu_M)}} \cdot \det \left( \frac{\sin \pi(\mu_j-\nu_k)}{\pi(\mu_j-\nu_k)} \right)_{j,k=1,\hdots,M} \,,
\end{multline*}
where $\Delta(x_1,\hdots,x_M) := \prod_{1 \le j < k \le M} (x_k - x_j)$
is the Vandermonde determinant and
$$
a_M := \prod_{p \in \myp} \left( 1 - \tfrac{1}{p} \right)^{M^2} \, \sum_{j=0}^{\infty} \left( \frac{\Gamma(j+M)}{j! \, \Gamma(M)} \right)^2 p^{-j} \,,
$$
the product being taken over the set $\myp$ of prime numbers.
(Naturally, in the case where two or more of the shift parameters are equal,
the right-hand side should be regarded as defined by continuous extension, 
similarly as in the preceding theorems.)
\end{conjecture}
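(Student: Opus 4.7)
The plan is to derive the claim directly from the CFKRS conjecture, by substituting the scaled shifts $\alpha_j := 2\pi i\mu_j/\log t$ and $\beta_j := -2\pi i\nu_j/\log t$ into CFKRS's explicit asymptotic and then identifying the limit with the sine-kernel determinant on the right-hand side. Recall that the CFKRS conjecture predicts
$$\int_{T_0}^{T}\prod_{j=1}^{M}\zeta(\tfrac12 + \alpha_j + it)\,\zeta(\tfrac12 + \beta_j - it)\,dt \sim \int_{T_0}^{T}\mathcal{P}_M(t;\alpha,\beta)\,dt,$$
where $\mathcal{P}_M$ is the ``swap sum'' indexed by pairs of subsets $U,V\subseteq\{1,\hdots,M\}$ with $|U|=|V|$; each swap term is the product of a $\chi$-type factor $(t/2\pi)^{-\sum_{j\in U}\alpha_j - \sum_{k\in V}\beta_k}$, the ``swapped'' product $\prod_{j,k}\zeta(1+\hat\alpha_j+\hat\beta_k)$ (with $\hat\alpha_j = -\alpha_j$ for $j\in U$ and $\hat\alpha_j = \alpha_j$ otherwise, and analogously for $\hat\beta_k$), and an Euler-product factor $A_M(\hat\alpha,\hat\beta)$.

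Under the substitution, three elementary asymptotics govern each swap term as $t\to\infty$: (i)~the $\chi$-factor tends to the pure phase $\exp(-2\pi i\sum_{j\in U}\mu_j + 2\pi i\sum_{k\in V}\nu_k)$, since $\log(t/2\pi)/\log t\to 1$; (ii)~each $\zeta(1+\hat\alpha_j+\hat\beta_k)$ sits at distance $O(1/\log t)$ from its pole at $s=1$ and behaves like $\log t/(2\pi i(\hat\mu_j - \hat\nu_k))$, contributing a factor of $\log t$ (with $\hat\mu_j := -\mu_j$ if $j\in U$ and $\hat\mu_j := \mu_j$ otherwise, and analogously for $\hat\nu_k$); and (iii)~the Euler product $A_M(\hat\alpha,\hat\beta)$ is continuous at the origin and evaluates there to $a_M$. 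Hence each swap term scales precisely as $(\log t)^{M^2}$ times a bounded rational expression in $\mu,\nu$, so that after dividing by $T(\log T)^{M^2}$ and integrating over $[T_0,T]$, the integrand may be replaced by its $t\to\infty$ limit.

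It then remains to identify the resulting swap sum with the sine-kernel determinant. Expanding $\det(\sin\pi(\mu_j-\nu_k)/\pi(\mu_j-\nu_k))$ via $2i\sin\pi x = e^{\pi ix} - e^{-\pi ix}$ and multilinearity in the rows and columns of the determinant produces a sum over sign patterns indexed by precisely the same pairs $(U,V)$; the resulting phases match those of item~(i), the denominators $\pi(\mu_j-\nu_k)$ combine with the $\zeta$-residues from item~(ii) and with the Vandermondes $\Delta(2\pi\mu)\Delta(2\pi\nu)$ on the right-hand side to yield the rational factor appearing in each swap term, and the constant $a_M$ from item~(iii) is pulled out of the sum. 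This is essentially the same combinatorial identity that, on the random-matrix side, converts the CUE swap expansion into the determinantal formula~(\ref{cue2}); the scaling $\mu_j\mapsto 2\pi i\mu_j/\log t$ was chosen exactly so that the two swap structures coincide.

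The main obstacle is the bookkeeping in this last step: verifying that the signs, phases, and Vandermonde cancellations all line up to identify the swap sum with the determinantal sine-kernel expression. This is purely algebraic and can be carried out either by comparing term-by-term with the residue computation sketched in Appendix~B for the CUE, or by passing to the contour-integral representation of the CFKRS swap sum and evaluating it via the same residue calculus that produces~(\ref{cue2}). A secondary concern is to confirm that the lower-order contributions in the CFKRS asymptotic remain negligible after division by $(\log T)^{M^2}$; this follows from the polynomial dependence on $\log t$ of every term in $\mathcal{P}_M$ together with the predicted power-savings error in the CFKRS conjecture.
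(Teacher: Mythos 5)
Your proposal is correct and follows essentially the same route as the paper: substitute the scaled shifts into the CFKRS swap-sum asymptotic, extract the leading $(\log t)^{M^2}$ behaviour from the poles of $\zeta(1+\cdot)$ together with $A_M(0,\hdots,0)=a_M$, and identify the resulting swap sum with the determinantal sine-kernel expression. The ``bookkeeping'' identity you defer to is exactly equation~(\ref{cue6}), which the paper establishes in Appendix~B by computing the CUE correlation function in two ways --- precisely the comparison you suggest.
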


It is easy to see that $a_1 = 1$ and $a_2 = 6/\pi^2$. 
Thus, Theorem~\ref{zeta2theorem} confirms Conjecture~\ref{skc}
in the special case $M = 1$,
and Theorem~\ref{zeta4theorem} confirms Conjecture~\ref{skc}
in the special case $M = 2$, $\mu_1 = \nu_1$, $\mu_2 = \nu_2$.

Furthermore, Equation (\ref{cue2}) and Conjecture \ref{skc}
clearly have a similar structure.
A notable difference is given by the factor $a_M$
which occurs in~Conjecture \ref{skc} for the Riemann zeta function
but not in~Equation (\ref{cue2}) for the CUE.
It is well-known \linebreak
(see \eg \textsc{Keating} and \textsc{Snaith} \cite{KS1,KS2}
and \textsc{Conrey}, \textsc{Farmer}, \textsc{Keating},
\textsc{Rubin\-stein}, and \textsc{Snaith} \cite{CFKRS1,CFKRS2})
that this ``arithmetic factor'' is not~predicted by random matrix theory.
Another difference is given by the sign in the phase factor
$\exp ( \pm \pi i \sum_{j=1}^{M} (\mu_j - \nu_j) )$.
This difference could have been avoided if we had defined 
the characteristic polynomial by \mbox{$\det (I - \xi^{-1} U)$} 
instead of \mbox{$\det (U - \xi I)$}.

As already mentioned, apart from \textsc{Atkinson}'s theorem,
we are not aware of explicit statements of \emph{continuous} 
mean value theorems involving the sine kernel.
This seems somewhat surprising, given that
the choice of the shift parameters on the scale $2\pi / \log t$
seems completely natural in view of the existing similarities
to random matrix theory.
In particular, this scaling also occurs 
in the pair correlation function 
of the zeros of the Riemann zeta function
(see \eg \textsc{Montgomery} \cite{Mon})
as well as in a number of \emph{discrete} mean value theorems
related to the zeros of the Riemann zeta function
(see \eg \textsc{Gonek}~\cite{Go}, \textsc{Hughes} \cite{Hu},
\textsc{Mozer} \cite{Moz1,Moz2,Moz3}).

Throughout this paper, we use the following notation:
Let $\zeta(s)$ denote the Riemann zeta function,
which is defined by the Dirichlet series
$$
\zeta(s) := \sum_{n=1}^{\infty} n^{-s}
$$
for $\re(s) > 1$ and by analytic continuation
for $\re(s) \leq 1$, and let 
$$
\chi(s) := 2^s \, \pi^{s-1} \, \sin (\tfrac{1}{2} \pi s) \, \Gamma(1-s) = \pi^{s-\frac{1}{2}} \, \Gamma(\tfrac{1}{2} - \tfrac{1}{2}s) \,/\, \Gamma(\tfrac{1}{2}s)
$$
for any $s \in \mathbb{C}$. \pagebreak[1]
We follow the convention of denoting 
the real and imaginary~part of the argument $s$ 
by $\sigma$ and $t$, respectively.
Furthermore, for any integer $n \geq 1$,
we~denote by $d(n)$ the number of divisors of $n$.
Finally, we make the convention 
that, unless otherwise indicated, the $\myo$-bounds 
occurring in the proofs may depend on $\mu$ and $\nu$ 
(which are regarded as fixed) but not on any other parameters.

This paper is structured as follows.
Section~2 is devoted to the proof of \linebreak Theorem~\ref{zeta4theorem}.
In Section~3 we discuss the relationship between Conjecture~\ref{skc}
for the higher (even) shifted moments of the Riemann zeta function
and the conjecture by \textsc{Conrey}, \textsc{Farmer}, \textsc{Keating}, 
\textsc{Rubinstein}, and \textsc{Snaith} \pagebreak[2] \cite{CFKRS1,CFKRS2}.
Finally, for the convenience of the reader, the appendices A~and~B
contain some auxiliary results from analytic number theory
and random matrix theory which have been used in the preceding sections.

\bigskip

\section{The Mean Value of the Fourth Moment}

The proof of Theorem~\ref{zeta4theorem} is modelled on the proof
of Theorem~B in \textsc{Ingham}~\cite{In}).

\begin{proof}[Proof of Theorem \ref{zeta4theorem}]
We will show that for any $\mu,\nu \in \mathbb{R}$,
\begin{multline}
\label{zeta4claim}
\lim_{T \to \infty} \frac{1}{T (\log T)^4} \int_{T}^{2T} \Big| \zeta \Big( \tfrac{1}{2} + i \Big( t + \tfrac{2\pi\mu}{\log t} \Big) \Big) \Big|^2 \, \Big| \zeta \Big( \tfrac{1}{2} + i \Big( t + \tfrac{2\pi\nu}{\log t} \Big) \Big) \Big|^2 \ dt
\\ =
\frac{3}{2\pi^4(\mu-\nu)^2} \cdot \left( 1 - \left( \frac{\sin \pi(\mu-\nu)}{\pi(\mu-\nu)} \right)^2 \right) \,.
\end{multline}
The assertion of Theorem \ref{zeta4theorem} then follows
by using (\ref{zeta4claim}) for $T/2^1$, $T/2^2$, $T/2^3$, \dots{}
and taking the~sum.

For the proof of (\ref{zeta4claim}),
we start from the approximate functional equation 
for $\zeta^2$ (see~\eg Theorem 4.2 in \textsc{Ivi\'c} \cite{Iv}) ,
which states that for any $h > 0$,
$$
\zeta^2(s) = \sum_{n \leq x} d(n) n^{-s} + \chi^2(s) \sum_{n \leq y} d(n) n^{-1+s} + \myo(x^{1/2-\sigma} \log t)
$$
for $0 < \sigma < 1$, $4\pi^2xy = t^2$, $x,y,t > h > 0$.
Taking $\sigma = \tfrac{1}{2}$, $t > 2$, $x(t) = y(t) = t/2\pi$,
it follows that
$$
\zeta^2(\tfrac{1}{2} + it) = \sum_{n \leq x(t)} d(n) \, n^{-\tfrac{1}{2}-it} + \chi^2(\tfrac{1}{2}+it) \sum_{n \leq x(t)} d(n) \, n^{-\tfrac{1}{2}+it} + \myo(\log t) \,.
$$
Using the functional equation
$$
  \zeta(\tfrac{1}{2} + it) 
= \chi(\tfrac{1}{2} + it) \, \zeta(\tfrac{1}{2} - it)
= \chi(\tfrac{1}{2} + it) \, \overline{\zeta(\tfrac{1}{2} + it)}
$$
(see \eg Equation (1.23) in \textsc{Ivi\'c} \cite{Iv})
and multiplying by $\chi(\tfrac{1}{2} + it)^{-1} = \overline{\chi(\tfrac{1}{2} + it)}$,
which is of order $\myo(1)$, we there\-fore obtain
$$
\big| \zeta(\tfrac{1}{2} + it) \big|^2 = 2 \re \left( \chi(\tfrac{1}{2}+it) \sum_{n \leq x(t)} d(n) \, n^{-\tfrac{1}{2}+it} \right) + \myo(\log t) \,.
$$
(This is Equation (4.11) in \textsc{Ivi\'c} \cite{Iv}.)

\pagebreak[2]

In the following, we will repeatedly use the fact
that for any fixed $\varepsilon > 0$,
$$
d(n) = \myo(n^{\varepsilon})
$$
(see \eg Equation (1.71) in \textsc{Ivi\'c}~\cite{Iv}).
In particular, this implies that
\begin{align*}
  \sum_{n \leq x(t)} d(n) n^{-\frac{1}{2}}
= \myo\bigg( \sum_{n \leq x(t)} n^{-\frac{1}{2}+\varepsilon} \bigg)
= \myo\bigg( \int_1^{x(t)} u^{-\frac{1}{2}+\varepsilon} \ du \bigg)
= \myo( t^{\frac{1}{2}+\varepsilon} ) \,.
\end{align*}
Using the approximation
$$
\chi(\tfrac{1}{2} + it) = e^{\pi i/4} \left(\frac{2\pi e}{t}\right)^{it} + \myo(t^{-1}) \qquad (t > 2)
$$
(see \eg Equation (1.25) in \textsc{Ivi\'c} \cite{Iv}), it follows that
$$
\big| \zeta(\tfrac{1}{2} + it) \big|^2 = 2 \re \left( e^{\pi i/4} \left(\frac{2\pi e}{t}\right)^{it} \sum_{n \leq x(t)} d(n) \, n^{-\tfrac{1}{2}+it} \right) + \myo(\log t) \,.
$$
Using this equation for $t + 2\pi\lambda / \log t$ instead of $t$,
where $\lambda $ is a fixed real number and $t$ is sufficiently large
(depending on $\lambda$), and using the straightforward estimate
\begin{align*}
   \left(\frac{2\pi e}{t + \frac{2\pi\lambda}{\log t}}\right)^{+it+\tfrac{2\pi i\lambda}{\log t}}
&= \left(\frac{2\pi e}{t}\right)^{+it+\tfrac{2\pi i\lambda}{\log t}} \left(\frac{1}{1 + \frac{2\pi\lambda}{t \log t}}\right)^{+it+\tfrac{2\pi i\lambda}{\log t}} \\
&= \left(\frac{2\pi e}{t}\right)^{+it+\tfrac{2\pi i\lambda}{\log t}} \exp \left( - \frac{2\pi i\lambda}{\log t} \right) + \myo (t^{-1}) \,,
\end{align*}
it finally follows that
\begin{align*}
\big| \zeta(\tfrac{1}{2} + it + 2\pi i\lambda / \log t) \big|^2 
=
2 \re \! \big( S(\lambda,t) \big) + \myo(\log t) \,,
\end{align*}
where
\begin{align}
\label{sdefinition}
S(\lambda,t) := e^{\pi i/4} \left(\frac{2\pi e}{t}\right)^{it} \left(\frac{2\pi}{t}\right)^{\tfrac{2\pi i\lambda}{\log t}} \sum_{n \leq x(t)} d(n) \, n^{-\tfrac{1}{2}+it+\tfrac{2\pi i\lambda}{\log t}} \,.
\end{align}

Now suppose that we can show that
\begin{multline}
\
\lim_{T \to \infty} \frac{1}{T (\log T)^4} \int_{T}^{2T} 2\re (S(\mu,t)) \cdot 2\re (S(\nu,t)) \ dt \\
=
\frac{3/2}{\pi^4(\mu-\nu)^2} \left( 1 - \left( \frac{\sin \pi(\mu-\nu)}{\pi(\mu-\nu)} \right)^2 \right)
\
\label{approx41}
\end{multline}
for any $\mu,\nu \in \mathbb{R}$. \pagebreak[1]

\noindent{}It then follows by the Cauchy-Schwarz inequality that,
for $T$ sufficiently large,
\begin{align*}
&\mskip24mu 
   \int_{T}^{2T} \left| \zeta(\tfrac{1}{2} + i t + \tfrac{2 \pi i \mu}{\log t} ) \right|^2 \, \left| \zeta(\tfrac{1}{2} - i t - \tfrac{2 \pi i \nu}{\log t} ) \right|^2 \ dt \\
&= \int_{T}^{2T} 2\re (S(\mu,t)) \cdot 2\re (S(\nu,t)) \ dt \\
&\quad \,+\, \myo \left( \Bigg( \int_{T}^{2T} \Big| 2\re (S(\mu,t)) \Big|^2 \ dt \Bigg)^{1/2} \Bigg( \int_{T}^{2T} (\log t)^{2} \ dt \Bigg)^{1/2} \right) \\
&\quad \,+\, \myo \left( \Bigg( \int_{T}^{2T} (\log t)^{2} \ dt \Bigg)^{1/2} \Bigg( \int_{T}^{2T} \Big| 2\re (S(\nu,t)) \Big|^2 \ dt \Bigg)^{1/2} \right) \\
&\quad \,+\, \myo \left( \int_{T}^{2T} (\log t)^{2} \ dt \right) \\
&= \int_{T}^{2T} 2\re (S(\mu,t)) \cdot 2\re (S(\nu,t)) \ dt + o \Big( T \log^4 T \Big) \,,
\end{align*}
and the theorem is proved.

Thus, it remains to prove (\ref{approx41}).
To begin with, we have
$$
  2\re (S(\mu,t)) \cdot 2\re (S(\nu,t))
= 2\re (S(\mu,t) \, \overline{S(\nu,t)}) + 2\re (S(\mu,t) \, S(\nu,t))
$$
and therefore
\begin{multline}
\int_{T}^{2T} 2\re (S(\mu,t)) \cdot 2\re (S(\nu,t)) \ dt \\
= 2\re \left( \int_{T}^{2T} S(\mu,t) \, \overline{S(\nu,t)} \ dt \right)
+ 2\re \left( \int_{T}^{2T} S(\mu,t) \, S(\nu,t) \ dt \right) \,.
\label{approx42}
\end{multline}
An elaboration of the argument in \textsc{Ingham} \cite{In}
shows that the second integral on the right-hand side 
in~(\ref{approx42}) is of order $o(T \log^4 T)$ 
and therefore tends to zero after~division by $T \log^4 T$ 
as in (\ref{approx41}).
Indeed, from (\ref{sdefinition}), we have
$$
\int_{T}^{2T} S(\mu,t) \, S(\nu,t) \ dt
=
i \sum_{m,n \leq x(2T)} \frac{d(m) \, d(n)}{\sqrt{m} \, \sqrt{n}}
\,
\int_{T'}^{2T} \exp(iF(t)) \ dt \,,
$$
for all $T \geq 2$, where $T' := T'(T,m,n) := \max\,\{ T,2\pi m,2\pi n \}$ and
$$
F(t) := t \log(mn) + 2t \log(2\pi e / t) + \tfrac{2\pi\mu}{\log t} \log(2\pi m / t) + \tfrac{2\pi\nu}{\log t} \log(2\pi n / t) \,.
$$
The derivatives of $F(t)$ are given by
$$
F'(t) = 2 \log(2\pi\sqrt{mn} / t) - \left( 2\pi\mu \log(2 \pi m) + 2\pi\nu \log(2 \pi n) \right) \, \frac{1}{t (\log t)^2}
$$
and \mbox{\qquad\qquad}
$$
F''(t) = - 2/t + \left( 2\pi\mu \log(2 \pi m) + 2\pi\nu \log(2 \pi n) \right) \, \frac{2 + \log t}{t^2 (\log t)^3} \,.
$$
Clearly, for $T$ sufficiently large (depending on $\mu,\nu$),
$F''(t) \leq -1/t \leq -1/2T$ for~all $t \in [T',2T]$.
Hence, by Lemma 4.4 in \textsc{Titchmarsh} \cite{Ti}
(=~Lemma~\ref{bound2}), we~have the~estimate
$$
\left| \int_{T'}^{2T} \exp(iF(t)) \ dt \right| = \myo \big( \sqrt{T}\, \big) \,.
$$
Moreover, for $m \ne n$, $T' \geq 2\pi \max \,\{m,n\} > 2\pi \sqrt{mn}$.
It follows that, for $T$ sufficiently large (depending on $\mu,\nu$),
$F'(t) \leq \log(2\pi\sqrt{mn} / t) \leq \log(2\pi\sqrt{mn} / T')
\linebreak[1] \leq - \tfrac{1}{2} | \log (n/m) |$ for~all $t \geq T'$.
Hence, by Lemma 4.2 in \textsc{Titchmarsh} \cite{Ti}
(=~Lemma~\ref{bound1}), we~have the estimate
$$
\left| \int_{T'}^{2T} \exp(iF(t)) \ dt \right| = \myo \bigg( \frac{1}{|\log(n/m)|} \bigg) \,.
$$
\enlargethispage{1.0\baselineskip}%
Combining these two estimates and using
Lemmas B.3 and B.1 in \textsc{Ingham} \cite{In}
(=~Lemmas \ref{sum4} and \ref{d2dsum}), it follows that
\begin{align*}
   \int_{T}^{2T} & S(\mu,t) \, S(\nu,t) \ dt
 = i \sum_{m,n \leq x(2T)} \frac{d(m) \, d(n)}{\sqrt{m} \, \sqrt{n}} \,\cdot\, \int_{T'}^{2T} \exp(iF(t)) \ dt \\
&= \myo \left( \sum_{1 \le m < n \leq x(2T)} \frac{d(m) \, d(n)}{\sqrt{mn} \, \log(n/m)} \right) + \myo \left( \sum_{n \leq x(2T)} \frac{d(n)^2}{n} \, \sqrt{T} \right) \\
&= \myo \big( T \log^3 T \big) + \myo \big( \sqrt{T} \log^4 T \big)
 = o \big( T \log^4 T \big) \,,
\end{align*}
as claimed. \pagebreak[2]

Thus, it remains to examine the first integral 
on the right-hand side in (\ref{approx42}).
To~begin~with, from (\ref{sdefinition}), we have
\begin{align*}
   \int_{T}^{2T} & S(\mu,t) \, \overline{S(\nu,t)} \ dt \\
&= \sum_{m,n \leq x(2T)} \frac{d(m) \, d(n)}{\sqrt{m} \, \sqrt{n}} \int_{T'}^{2T} \left( \frac{2\pi}{t} \right)^{\tfrac{2\pi i(\mu-\nu)}{\log t}} \, m^{+it+\tfrac{2\pi i\mu}{\log t}} \, n^{-it-\tfrac{2\pi i\nu}{\log t}} \ dt \\
&= e^{-2\pi i(\mu-\nu)} \sum_{m,n \leq x(2T)} \frac{d(m) \, d(n)}{\sqrt{m} \, \sqrt{n}} \int_{T'}^{2T} (m/n)^{it} \left( 2\pi m \right)^{+\tfrac{2\pi i\mu}{\log t}} \, \left( 2\pi n \right)^{-\tfrac{2\pi i\nu}{\log t}} \ dt
\end{align*}
for all $T \geq 2$, where $T' := T'(T,m,n) := \max\,\{ T,2\pi m,2\pi n \}\,.$

Now, for those pairs $(m,n)$ with $m \ne n$,
we find using integration by parts that
\begin{align*}
&\mskip24mu \int_{T'}^{2T} (m/n)^{it} \left( 2\pi m \right)^{+\tfrac{2\pi i\mu}{\log t}} \, \left( 2\pi n \right)^{-\tfrac{2\pi i\nu}{\log t}} \ dt \\
&= \Biggl[ \frac{(m/n)^{it}}{i \log(m/n)} \left( 2\pi m \right)^{+\tfrac{2\pi i\mu}{\log t}} \left( 2\pi n \right)^{-\tfrac{2\pi i\nu}{\log t}} \Biggr]_{t=T'}^{t=2T} + \int_{T'}^{2T} \frac{(m/n)^{it}}{i \log(m/n)} \\ &\qquad\,\cdot\, \left( 2\pi m \right)^{+\tfrac{2\pi i\mu}{\log t}} \left( 2\pi n \right)^{-\tfrac{2\pi i\nu}{\log t}} \left[ \frac{2\pi i\mu \log(2\pi m)}{t (\log t)^2} - \frac{2\pi i\nu \log(2\pi n)}{t (\log t)^2} \right] \ dt \\
&= \myo \left( \frac{1}{|\log (m/n)|} \right) \,,
\end{align*}
where the last step uses the inequalities \pagebreak[2]
$T \leq T' \leq 2T$ and $m,n \leq x(2T) \leq T$.
Hence, using Lemma B.3 in \textsc{Ingham} \cite{In}
(= Lemma~\ref{sum4}), it follows that
\begin{multline*}
\sum_{m \ne n} \frac{d(m) \, d(n)}{\sqrt{m} \, \sqrt{n}} \int_{T'}^{2T} (m/n)^{it} \left( 2\pi m \right)^{+\tfrac{2\pi i\mu}{\log t}} \, \left( 2\pi n \right)^{-\tfrac{2\pi i\nu}{\log t}} \ dt \\
= \myo \left( \sum_{m \ne n} \frac{d(m) \, d(n)}{\sqrt{mn} \, |\log (m/n)|} \right)
= \myo \left( x(2T) \log^3 x(2T) \right)
= \myo \left( T \log^3 T \right) \,,
\end{multline*}
so that the sum over the pairs $(m,n)$ with $m \ne n$ tends to zero
after division by~$T \log^4 T$ as in (\ref{approx41}).

Consequently, to determine the asymptotic behaviour of the first integral
on the~right-hand side in (\ref{approx42}), it remains to consider
the sum over the pairs $(m,n)$ with $m = n$ and to show that
\begin{multline}
\lim_{T \to \infty} \frac{1}{T \log^4 T} \cdot 2\re \Bigg( e^{-2\pi i(\mu-\nu)} \, \sum_{n \leq x(2T)} \frac{d(n)^2}{n} \int_{T'}^{2T} \left( 2\pi n \right)^{\tfrac{2\pi i(\mu-\nu)}{\log t}} \ dt \Bigg)
\\ =
\frac{3/2}{\pi^4(\mu-\nu)^2} \left( 1 - \left( \frac{\sin \pi(\mu-\nu)}{\pi(\mu-\nu)} \right)^2 \right) \,.
\label{approx43}
\end{multline}
Clearly, in doing so, we may assume without loss of generality that $\nu = 0$.

To evaluate the integral on the left-hand side in (\ref{approx43}), write
$$
(2\pi n)^{2\pi i\mu/\log t} = (2\pi n)^{2\pi i\mu/\log T} - \int_{T}^{t} (2\pi n)^{2\pi i\mu/\log u} \, \log (2\pi n) \, \frac{2\pi i\mu}{u \, (\log u)^2} \ du
$$
and note that for $T \leq t \leq 2T$ and $n \leq 2T$,
$$
\int_{T}^{t} (2\pi n)^{2\pi i\mu/\log u} \, \log (2\pi n) \, \frac{2\pi i\mu}{u (\log u)^2} \ du
= \myo \left( (t-T) \cdot \frac{\log (2\pi n)}{T (\log T)^2} \right) \\
= \myo \left( \frac{1}{\log T} \right) \,.
$$
Hence, since $T' \geq T$ and $x(2T) \leq 2T$, it follows that
\begin{align*}
   \int_{T'}^{2T} (2\pi n)^{2\pi i\mu/\log t} \ dt
&=  \big( 2T - T' \big) \, (2\pi n)^{\tfrac{2\pi i\mu}{\log T}} + \myo \left( \frac{T}{\log T} \right) \nonumber\\
&= T \, (2\pi n)^{\tfrac{2\pi i\mu}{\log T}} - \big( T' - T \big) \, (2\pi n)^{\tfrac{2\pi i\mu}{\log T}} + \myo \left( \frac{T}{\log T} \right)
\end{align*}
and therefore
\begin{multline}
e^{-2\pi i\mu} \sum_{n \leq x(2T)} \frac{d(n)^2}{n} \int_{T'}^{2T} \left( 2\pi n \right)^{\tfrac{2\pi i\mu}{\log t}} \ dt
= e^{-2\pi i\mu} \sum_{n \leq x(2T)} T \, \frac{d(n)^2}{n} \, (2\pi n)^{\tfrac{2\pi i\mu}{\log T}}
\\
- e^{-2\pi i\mu} \sum_{n \leq x(2T)} \big( T' - T \big) \,\frac{d(n)^2}{n} \, (2\pi n)^{\tfrac{2\pi i\mu}{\log T}}
+ \myo \left( \frac{T}{\log T} \sum_{n \leq x(2T)} \frac{d(n)^2}{n} \right) \,.
\label{approx45}
\end{multline}
By Lemma B.1 in \textsc{Ingham} \cite{In} (= Lemma \ref{d2dsum}), we have
$$
\sum_{n \leq T} \frac{d(n)^2}{n} = \frac{1}{4\pi^2} \log^4 T + \myo(\log^3 T) \,.
$$
Thus, the $\myo$-term in (\ref{approx45}) is obviously of order $o(T \log^4 T)$.
Also, since $T' = T$ for~$n \leq x(T)$,
\begin{align*}
   \sum_{n \leq x(2T)} (T'- T) \, \frac{d(n)^2}{n}
&= \sum_{x(T) < n \leq x(2T)} (T'- T) \, \frac{d(n)^2}{n} \\
&\leq
   T \, \Biggl(\, \sum_{n \leq x(2T)} \frac{d(n)^2}{n} - \sum_{n \leq x(T)} \frac{d(n)^2}{n} \Biggr) \\
&= T \, \Biggl( \frac{1}{4\pi^2} \log^4 (2T) - \frac{1}{4\pi^2} \log^4 (T) + \myo ( \log^3 T ) \Biggr) \\
&= o(T \log^4 T) \,,
\end{align*}
so that the second sum on the right-hand side in (\ref{approx45})
is also of order $o(T \log^4 T)$.

Thus, it remains to consider
the first sum on the right-hand side in (\ref{approx45}).
Similarly as in the proof of Lemma B.1 in~\textsc{Ingham}~\cite{In},
we can approximate this~sum by an integral. Setting
$$
D(t) := \sum_{n \leq t} d(n)^2
$$
and using Lemma \ref{d2sum}, we have,
for $\lambda \in \mathbb{R}$ from a bounded set,
\begin{align*}
&\mskip24mu \sum_{n \leq x(2T)} d(n)^2 \, n^{-1+i\lambda} \\
&= \sum_{n \leq x(2T)} \left( D(n) - D(n-1) \right) \, n^{-1+i\lambda} \\
&= \sum_{n \leq x(2T)-1} D(n) \left( n^{-1+i\lambda} - (n+1)^{-1+i\lambda} \right) + \myo(\log^3 T) \allowdisplaybreaks\\
&= (1-i\lambda) \int_{1}^{x(2T)} \frac{D(u)}{u^{2-i\lambda}} \ du + \myo(\log^3 T) \\
&= (1-i\lambda) \, \frac{1}{\pi^2} \int_{1}^{x(2T)} \frac{\log^3 u}{u^{1-i\lambda}} \ du + \myo\bigg(\int_1^{x(2T)} \frac{\log^2 u}{u} \ du \bigg) + \myo(\log^3 T) \\
&= (1-i\lambda) \, \frac{1}{\pi^2} \int_{1}^{x(2T)} \frac{\log^3 u}{u^{1-i\lambda}} \ du + \myo(\log^3 T) \,.
\end{align*}
Substituting $v = \log u$ and $w = v / \log T$ yields
\begin{align*}
   \int_{1}^{x(2T)} \frac{\log^3 u}{u^{1-i\lambda}} \ du
&= \int_{0}^{\log x(2T)} v^3 \, e^{i\lambda v} \ dv \\
&= (\log T)^4 \, \int_{0}^{\log x(2T) / \log T} w^3 \, e^{i\lambda w \log T} \ dw
\end{align*}
and therefore
$$
\sum_{n \leq x(2T)} d(n)^2 \, n^{-1+i\lambda} \\
=
(\log T)^4 \, (1-i\lambda) \cdot \frac{1}{\pi^2} \int_{0}^{1} w^3 \, e^{i\lambda w \log T} \ dw + \myo(\log^3 T) \,.
$$
Thus, with $\lambda$ replaced by $2\pi\mu/\log T$, it follows that
\begin{align*}
&\mskip24mu
   T \, e^{-2\pi i\mu} \sum_{n \leq x(2T)} \frac{d(n)^2}{n} \, (2\pi n)^{\tfrac{2\pi i\mu}{\log T}} \\ 
&= T \, (\log T)^4 \, (2\pi)^{\tfrac{2\pi i\mu}{\log T}} \, (1-\tfrac{2\pi i\mu}{\log T}) \cdot \frac{1}{\pi^2} \int_{0}^{1} w^3 \, e^{2\pi i\mu(w-1)} \ dw + \myo(T \log^3 T) \\
&= T \, (\log T)^4 \cdot \frac{1}{\pi^2} \int_{0}^{1} w^3 \, e^{2\pi i\mu(w-1)} \ dw + \myo(T \log^3 T) \,.
\end{align*}
Dividing by $T \, \log^4 T$ and taking real parts, we therefore obtain
\begin{multline*}
  \lim_{T \to \infty} \frac{1}{T \log^4 T} \cdot 2 \re \Bigg( T \, e^{-2\pi i\mu} \sum_{n \leq x(2T)} \frac{d(n)^2}{n} \, (2\pi n)^{\tfrac{2\pi i\mu}{\log T}} \Bigg) \\
= \frac{2}{\pi^2} \int_{0}^{1} w^3 \, \cos(2\pi\mu(w-1)) \ dw \,.
\end{multline*}
A direct calculation using the trigonometric identity
$1 - \cos(z) = 2 \sin^2 (z/2)$ yields
\begin{align*}
   \int_{0}^{1} w^3 \, \cos(2\pi\mu(w-1)) \ dw
&= \frac{3}{(2\pi\mu)^2} \left( 1 - \frac{2 - 2 \cos(2\pi\mu)}{(2\pi\mu)^2} \right) \\
&= \frac{3}{(2\pi\mu)^2} \left( 1 - \left( \frac{\sin \pi\mu}{\pi\mu} \right)^2 \right) \,.
\end{align*}
This is true also for $\mu = 0$, provided that we consider
the continuous extension of the right-hand side, \ie 1/4. 
This concludes the proof of~(\ref{approx43}),
and hence of Theorem \ref{zeta4theorem}.
\end{proof}

\bigskip

\section{The Conjecture for the Higher Shifted Moments}

In this section we comment on the relationship between Conjecture~\ref{skc}
for the~higher (even) shifted moments of the Riemann zeta function
and the conjecture by \textsc{Conrey}, \textsc{Farmer}, \textsc{Keating}, 
\textsc{Rubinstein}, and \textsc{Snaith} \pagebreak[2] \cite{CFKRS1,CFKRS2},
which we will simply call the CFKRS-Conjecture from now on.
 
In the special case of the Riemann zeta function,
this conjecture can be stated as follows:

\begin{conjecture}[Conjecture 2.2 in \cite{CFKRS1}]
\label{cfkrs}
For any $M=1,2,3,\hdots,$ and any~$\mu_1,\hdots,\mu_M,\nu_1,\hdots,\nu_M \in \mathbb{R}$,
\begin{multline*}
\int_{0}^{T} 
\prod_{j=1}^{M} \zeta \left(\tfrac{1}{2} + it + i\mu_j \right)
\prod_{j=1}^{M} \zeta \left(\tfrac{1}{2} - it - i\nu_j \right)
\ dt
\\ =
\int_{0}^{T}
W_M(t;i\mu_1,\hdots,i\mu_M;i\nu_{1},\hdots,i\nu_M)
\left( 1 + \myo(t^{-(1/2) + \varepsilon}) \right) 
\ dt \,, 
\end{multline*}
where
\begin{multline*}
W_M(t;\xi_1,\hdots,\xi_M,\xi_{M+1},\hdots,\xi_{2M}) \\
:= \exp (\tfrac{1}{2} \log \tfrac{t}{2\pi} \cdot {\textstyle\sum\limits_{j=1}^{M} (-\xi_j+\xi_{M+j})})
\,\cdot\, \sum_{\sigma \in \mys_{2M}'} \exp (\tfrac{1}{2} \log \tfrac{t}{2\pi} \cdot {\textstyle\sum\limits_{j=1}^{M} (\xi_{\sigma(j)}-\xi_{\sigma(M+j)})})
\\ \,\cdot\, A_M(\xi_{\sigma(1)},\hdots,\xi_{\sigma(2M)}) \cdot \prod_{j,k=1,\hdots,M} \zeta(1 + \xi_{\sigma(j)} - \xi_{\sigma(M+k)}) \,. 
\end{multline*}
Here, $\mys_{2M}'$ denotes the subset of permutations $\sigma$
of the set $\{ 1,\hdots,2M \}$ satisfying 
$\sigma(1) < \cdots < \sigma(M)$ and 
$\sigma(M+1) \linebreak[1] < \cdots < \linebreak[1] \sigma(2M)$,
and $A_M(z_1,\hdots,z_{2M})$ is a~certain function
which is analytic in a~neighborhood of the origin
and for which $A_M(0,\hdots,0) = a_M$.
\end{conjecture}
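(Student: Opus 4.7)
The plan is to follow the \emph{recipe} of \textsc{Conrey}, \textsc{Farmer}, \textsc{Keating}, \textsc{Rubinstein}, and \textsc{Snaith} \cite{CFKRS1}, which produces the claimed asymptotic by combining the approximate functional equation with a diagonal-sum computation. First I would apply the AFE to each of the $2M$ zeta factors: for each $j$ write $\zeta(\tfrac{1}{2} + it + i\mu_j)$ as a ``main'' Dirichlet polynomial of length $X$ plus $\chi(\tfrac{1}{2} + it + i\mu_j)$ times a ``swapped'' Dirichlet polynomial of length $Y$ (with $XY \asymp t/(2\pi)$), and analogously for each $\zeta(\tfrac{1}{2} - it - i\nu_k)$. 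Expanding the product of these $2M$ AFEs yields $2^{2M}$ pieces, each being a multiple Dirichlet sum times a specific subset of $\chi$-factors.

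The second step is to integrate each piece over $t \in [0,T]$. The product of $\chi$-factors attached to a given piece is smooth in $t$, and via the Stirling-type expansion $\chi(\tfrac{1}{2} + it) \sim e^{\pi i/4}(2\pi e / t)^{it}$ it produces precisely the phase $\exp \bigl( \tfrac{1}{2} \log \tfrac{t}{2\pi} \cdot \sum_{j}(\xi_{\sigma(j)} - \xi_{\sigma(M+j)}) \bigr)$ predicted by $W_M$. The remaining $t$-dependence then lies in the Dirichlet monomial $\prod_j m_j^{\pm it}$, so $t$-integration localizes onto the diagonal $\prod_{j=1}^{M} m_j = \prod_{j=1}^{M} n_j$. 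On this diagonal the multiple sum factorizes over primes, and analytic continuation of the resulting Euler product identifies the arithmetic factor $A_M$ together with the product $\prod_{j,k} \zeta(1 + \xi_{\sigma(j)} - \xi_{\sigma(M+k)})$.

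The third step is the combinatorial reorganization. Each of the $2^{2M}$ pieces corresponds to a choice of ``keep'' versus ``swap'' at each AFE; after relabeling the shift variables accordingly, such a choice is encoded by a shuffle permutation $\sigma \in \mys_{2M}'$, and summing the contributions indexed by $\sigma$ (together with the global prefactor $\exp(\tfrac{1}{2} \log \tfrac{t}{2\pi} \cdot \sum_j (-\xi_j + \xi_{M+j}))$ coming from the swapped $\chi$-factors attached to the second block) reproduces exactly the displayed formula for $W_M$.

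The genuine obstacle --- and the reason the statement remains a conjecture for $M \ge 3$ --- is to control the off-diagonal terms $\prod m_j \ne \prod n_j$ and the AFE truncation error, and to show that together they yield only a relative $\myo(t^{-1/2 + \varepsilon})$ correction. For $M=1$ this is what underlies \textsc{Atkinson}'s proof of Theorem \ref{zeta2theorem}; for $M=2$ it follows from \textsc{Motohashi}'s \cite{Mot} spectral decomposition of the fourth moment. For $M \ge 3$, however, the relevant off-diagonal sums have length $\asymp t^{M/2}$ and oscillate against kernels of comparable conductor, and no currently available technique saves a power of $t$ uniformly in the shift parameters. Realistically my plan thus delivers only the heuristic main term via the recipe above; a rigorous off-diagonal estimate for even a single $M \ge 3$ would itself be a major advance in analytic number theory, far beyond the scope of a routine proof sketch.
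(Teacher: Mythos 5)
This statement is not proved in the paper, and indeed cannot be: it is Conjecture 2.2 of Conrey--Farmer--Keating--Rubinstein--Snaith, imported verbatim, and the paper only \emph{uses} it (in Section~3, with the shifts replaced by $2\pi\mu_j/\log t$, $2\pi\nu_j/\log t$) to motivate Conjecture~\ref{skc}. So there is no proof in the paper against which to measure your attempt, and your own conclusion --- that what you can supply is the heuristic ``recipe'' derivation of the main term, not a proof --- is exactly the correct assessment. Your sketch (approximate functional equations for all $2M$ factors, the $2^{2M}$ keep/swap pieces, the $\chi$-factor phases producing $\exp(\tfrac12\log\tfrac{t}{2\pi}\sum_j(\xi_{\sigma(j)}-\xi_{\sigma(M+j)}))$, diagonal $\prod m_j=\prod n_j$ contributions giving the Euler product that splits into $A_M$ times $\prod_{j,k}\zeta(1+\xi_{\sigma(j)}-\xi_{\sigma(M+k)})$, and the indexing of the surviving pieces by shuffles $\sigma\in\mys_{2M}'$) is faithful to how CFKRS arrive at $W_M$, and it is consistent with the paper's own remark that applying the recipe directly to the scaled shifts yields the approximation used in Section~3. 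Your diagnosis of the genuine obstruction is also accurate: the $M=1$ case is classical (Ingham/Atkinson), the $M=2$ case follows from Motohashi's Theorem~4.2 (as the paper's introduction notes), while for $M\ge 3$ no rigorous control of the off-diagonal terms is known, which is precisely why the statement is a conjecture. The only caveat worth adding is that even for $M=1,2$ the specific error term $\myo(t^{-1/2+\varepsilon})$ claimed in the conjecture is finer than what a bare diagonal analysis gives, so citing Atkinson's argument as ``underlying'' the $M=1$ case should be read as establishing the main term, not the stated error.
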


We will show that Conjecture \ref{skc} follows from the CFKRS-conjecture
provided that one permits replacing $\mu_1,\hdots,\mu_M,\nu_1,\hdots,\nu_M$ with 
$2\pi \mu_1/\log t,\hdots,2\pi \mu_M/\log t, \linebreak[2] 2\pi \nu_1/\log t,\hdots,2\pi \nu_M/\log t$.
In this respect, Conjecture \ref{skc} may be regarded as a~special~case
of the CFKRS-conjecture. 

Similarly as in the proof of Theorem \ref{zeta4theorem}, 
we prefer working with the interval $[T,2T]$ instead of $[0,T]$.
Besides that, we will only consider the leading-order terms.
We then have the approximation
\begin{multline}
\label{zetaf2}
\int_{T}^{2T} 
\prod_{j=1}^{M} \zeta \left(\tfrac{1}{2} + it + \tfrac{2\pi i\mu_j}{\log t} \right)
\prod_{j=1}^{M} \zeta \left(\tfrac{1}{2} - it - \tfrac{2\pi i\nu_j}{\log t} \right)
\ dt
\\ \approx
\int_{T}^{2T}
  \exp (\tfrac{1}{2} \log \tfrac{t}{2\pi} \cdot {\textstyle\sum_{j=1}\limits^{M} (-\frac{\xi_j}{\log t}+\frac{\xi_{M+j}}{\log t})})
\,\cdot\, \sum_{\sigma \in \mys_{2M}'} \exp (\tfrac{1}{2} \log \tfrac{t}{2\pi} \cdot {\textstyle\sum\limits_{j=1}^{M} (\frac{\xi_{\sigma(j)}}{\log t}-\frac{\xi_{\sigma(M+j)}}{\log t})})
\\ \,\cdot\, A_M(\tfrac{\xi_{\sigma(1)}}{\log t},\hdots,\tfrac{\xi_{\sigma(2M)}}{\log t}) \cdot \prod_{j,k=1,\hdots,M} \zeta(1 + \tfrac{\xi_{\sigma(j)}}{\log t} - \tfrac{\xi_{\sigma(M+k)}}{\log t})
\ dt \,, 
\end{multline}
where we have put 
$\xi_j := 2 \pi i \mu_j$ for $j=1,\hdots,M$, 
$\xi_{M+j} := 2 \pi i \nu_j$ for $j=1,\hdots,M$,
and $\mathcal{S}_{2M}'$ and $A_M$ are the same as in the CFKRS-conjecture.
Alternatively, \linebreak the approximation (\ref{zetaf2})
could be obtained by starting from the expression
$$
\int_{T}^{2T} \prod_{j=1}^{M} \zeta(\tfrac{1}{2} + it + \tfrac{2\pi i\mu_j}{\log t}) \, \prod_{j=1}^{M} \zeta(\tfrac{1}{2} - it - \tfrac{2\pi i\nu_j}{\log t}) \ dt
$$
and by following the (non-rigorous) ``recipe'' leading to the CFKRS-conjecture.
(In~fact, since the factor $\frac{1}{\log t}$ is essentially constant,
it is irrelevant for the question which terms are rapidly oscillating
and should therefore be discarded.) 

To simplify (\ref{zetaf2}) as $T \to \infty$,
recall that $A_M$ is regular at~$(0,\hdots,0)$ 
and $\zeta$ has a~simple pole with residual $1$ at $z = 1$.
Thus, concentrating on leading-order terms, we obtain
\begin{multline}
\label{zetaf3}
\int_{T}^{2T} 
\prod_{j=1}^{M} \zeta \left(\tfrac{1}{2} + it + \tfrac{2\pi i\mu_j}{\log t} \right)
\prod_{j=1}^{M} \zeta \left(\tfrac{1}{2} - it - \tfrac{2\pi i\nu_j}{\log t} \right)
\ dt
\\ \approx
\int_{T}^{2T}
  \exp (\tfrac{1}{2} \cdot {\textstyle\sum_{j=1}\limits^{M} (-\xi_j+\xi_{M+j})}
\,\cdot\, \sum_{\sigma \in \mys_{2M}'} \exp (\tfrac{1}{2} \cdot {\textstyle\sum\limits_{j=1}^{M} (\xi_{\sigma(j)}-\xi_{\sigma(M+j)})})
\\ \,\cdot\, A_M(0,\hdots,0) \cdot \frac{(\log t)^{M^2}}{\prod_{j,k=1,\hdots,M} (\xi_{\sigma(j)} - \xi_{\sigma(M+k)})}
\ dt \,.
\end{multline}
Therefore, since
$$
  \int_{T}^{2T} (\log t)^{M^2} \ dt
= T (\log T)^{M^2} + \myo \left( T (\log T)^{M^2 - 1} \right) \,,
$$
we should expect that
\begin{multline*}
\lim_{T \to \infty}
\frac{1}{T (\log T)^{M^2}} 
\int_{T}^{2T} 
\prod_{j=1}^{M} \zeta(\tfrac{1}{2} + it + \tfrac{2\pi i\mu_j}{\log t})
\prod_{j=1}^{M} \zeta(\tfrac{1}{2} - it - \tfrac{2\pi i\nu_j}{\log t})
\ dt
\\ = \exp (\tfrac{1}{2} \cdot {\textstyle\sum_{j=1}^{M} (-\xi_j+\xi_{M+j})})
\cdot \sum_{\sigma \in \mys_{2M}'} \exp (\tfrac{1}{2} \cdot {\textstyle\sum_{j=1}^{M} (\xi_{\sigma(j)}-\xi_{\sigma(M+j)})}) \qquad
\\ \,\cdot\, A_M(0,\hdots,0) \cdot \frac{1}{\prod_{j,k=1,\hdots,M} (\xi_{\sigma(j)} - \xi_{\sigma(M+k)})} \,. 
\end{multline*}
Since
$
A_M(0,\hdots,0) = a_M
$ 
(see Equation (2.7.10) in \cite{CFKRS2})
and
\begin{multline*}
\sum_{\sigma \in \mys_{2M}'} \exp (\tfrac{1}{2} \cdot {\textstyle\sum_{j=1}^{M} (\xi_{\sigma(j)}-\xi_{\sigma(M+j)})})
\cdot \frac{1}{\prod_{j,k=1,\hdots,M} (\xi_{\sigma(j)} - \xi_{\sigma(M+k)})}
\\
= \frac{1}{\Delta(2\pi\mu_1,\hdots,2\pi\mu_M) \cdot {\Delta(2\pi\nu_1,\hdots,2\pi\nu_M)}} \cdot \det \left( \frac{\sin \pi(\mu_j-\nu_k)}{\pi(\mu_j-\nu_k)} \right)_{j,k=1,\hdots,M}
\end{multline*}
(see equation (\ref{cue6}) in Appendix~B),
this yields Conjecture \ref{skc}.

\bigskip

\appendix 

\section{Some Estimates from the Literature}

In this appendix, we state some estimates from the literature
which have been used in the proofs
of Theorems \ref{zeta2theorem} and \ref{zeta4theorem}.

\begin{lemma}[Titchmarsh \cite{Ti}, Lemma 4.2]
\label{bound1}
Let $F(x)$ be a real differentiable function such that $F'(x)$ is monotonic,
and $F'(x) \geq \varepsilon > 0$ or $F'(x) \leq -\varepsilon < 0$, 
throughout the interval $[a,b]$. Then
$$
\left| \int_{a}^{b} \exp(iF(x)) \ dx \right| \leq \frac{4}{\varepsilon} \,.
$$
\end{lemma}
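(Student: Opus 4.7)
The plan is to reduce the oscillatory integral to a boundary contribution plus a controlled remainder via integration by parts, exploiting the algebraic identity $e^{iF(x)} = (iF'(x))^{-1} \frac{d}{dx} e^{iF(x)}$. This rewriting is permissible precisely because $F'$ is bounded away from zero on $[a,b]$, and after integration by parts the factor $1/F'(x)$ will be the object whose monotonicity is exploited.

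Assuming first, for simplicity, that $F'$ is absolutely continuous, integration by parts gives
$$
\int_a^b e^{iF(x)} \, dx \;=\; \left[\frac{e^{iF(x)}}{iF'(x)}\right]_a^b + \int_a^b \frac{F''(x)}{i\, F'(x)^2} \, e^{iF(x)} \, dx.
$$
I would bound the boundary term trivially by $|F'(a)|^{-1} + |F'(b)|^{-1} \le 2/\varepsilon$. For the remaining integral, the hypothesis that $F'$ is monotonic and of fixed sign forces $F''$ not to change sign a.e., so $\int_a^b |F''(x)|/F'(x)^2 \, dx$ telescopes to $|1/F'(a) - 1/F'(b)| \le 2/\varepsilon$. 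Summing yields the claimed bound $4/\varepsilon$.

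To handle the general case, in which $F'$ is merely assumed monotonic with no further regularity, I would recast the identity above as a Riemann--Stieltjes integration by parts against the finite signed measure $d(1/F')$; the role of $\int |F''|/F'^2$ is then played directly by the total variation $|1/F'(a) - 1/F'(b)|$ of the monotonic function $1/F'$. Equivalently, the second mean value theorem for integrals, applied separately to $\cos F(x)$ and $\sin F(x)$ with the monotonic weight $1/F'$, achieves the same estimate without invoking Stieltjes integrals at all.

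The only real obstacle is this technical point of justifying integration by parts in the absence of a pointwise $F''$; once the Stieltjes (or second-mean-value) formulation is adopted, the computation is entirely bookkeeping, and the constant $4$ arises simply as the sum of two contributions of size $2/\varepsilon$, one from the boundary and one from the total variation of $1/F'$.
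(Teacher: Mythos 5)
Your argument is correct. Note first that the paper itself gives no proof of this lemma: it is quoted verbatim from Titchmarsh (Lemma 4.2 of \cite{Ti}), whose proof is exactly the second--mean--value--theorem argument you offer as your fallback -- one writes $\cos F(x) = \tfrac{1}{F'(x)}\cdot F'(x)\cos F(x)$, applies Bonnet's form of the theorem with the single-signed monotonic weight $1/F'$ to get $\bigl|\int_a^b \cos F\,dx\bigr| \le 2/\varepsilon$ (the inner integral being an increment of $\sin F$, hence of modulus at most $2$), does the same for $\sin F$, and adds. Your primary route via integration by parts is an equivalent and equally standard packaging; it is sound, and you correctly identify the only delicate point, namely that no pointwise $F''$ is available, and resolve it via the Stieltjes form. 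Two small remarks. First, the regularity worry is milder than you suggest: $F'$ is a derivative with the Darboux property, and a monotonic Darboux function is continuous, so $1/F'$ is a continuous monotonic function of total variation $|1/F'(a)-1/F'(b)|$, and the Riemann--Stieltjes integration by parts is unproblematic. Second, your bookkeeping of the constant is slightly loose but not wrong: since $F'(a)$ and $F'(b)$ have the same sign, $|1/F'(a)-1/F'(b)| \le \max(1/|F'(a)|,1/|F'(b)|) \le 1/\varepsilon$, so the integration-by-parts route actually yields $3/\varepsilon$; the constant $4$ in Titchmarsh's statement comes from his summing the $2/\varepsilon$ bounds for the real and imaginary parts, not from a boundary-plus-variation split. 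Either way the claimed bound $4/\varepsilon$ holds.
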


\begin{lemma}[Titchmarsh \cite{Ti}, Lemma 4.4]
\label{bound2}
Let $F(x)$ be a real function, twice differentiable, such that
$F''(x) \geq \varepsilon > 0$ or $F''(x) \leq -\varepsilon < 0$, 
throughout the interval $[a,b]$. Then
$$
\left| \int_{a}^{b} \exp(iF(x)) \ dx \right| \leq \frac{8}{\sqrt{\varepsilon}} \,.
$$
\end{lemma}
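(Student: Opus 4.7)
The plan is to reduce the assertion to the first-derivative van der Corput estimate already stated as Lemma \ref{bound1} by partitioning $[a,b]$ according to whether $F'$ is close to zero or not. Since replacing $F$ by $-F$ conjugates the integrand and therefore preserves the absolute value of the integral, I may assume without loss of generality that $F''(x) \geq \varepsilon > 0$ throughout $[a,b]$, so that $F'$ is strictly increasing on the interval.

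For a parameter $\delta > 0$ to be chosen later, set $E_\delta := \{ x \in [a,b] : |F'(x)| \leq \delta \}$. Because $F'$ is monotone, $E_\delta$ is a (possibly empty) subinterval of $[a,b]$, and for any two points $x_1 < x_2$ in $E_\delta$ the mean value theorem applied to $F'$ gives
$$2\delta \geq F'(x_2) - F'(x_1) \geq \varepsilon (x_2 - x_1),$$
so $E_\delta$ has length at most $2\delta/\varepsilon$. Estimating $|\exp(iF(x))| = 1$ trivially on this part contributes at most $2\delta/\varepsilon$ to the integral. The complement $[a,b] \setminus E_\delta$ consists of at most two subintervals, on each of which $F'$ is monotone and satisfies $|F'(x)| \geq \delta$; applying Lemma \ref{bound1} on each one bounds its contribution by $4/\delta$, for a total of at most $8/\delta$.

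Combining these two estimates yields
$$\left| \int_a^b \exp(iF(x)) \, dx \right| \leq \frac{2\delta}{\varepsilon} + \frac{8}{\delta}.$$
Optimizing in $\delta$ by differentiating the right-hand side gives $\delta^2 = 4\varepsilon$, so the optimal choice is $\delta = 2\sqrt{\varepsilon}$; at this value both terms equal $4/\sqrt{\varepsilon}$ and the total is $8/\sqrt{\varepsilon}$, which is precisely the claimed bound. There is no substantive obstacle in the argument; the only delicate point is the calibration of $\delta$, which is arranged so as to recover the sharp constant $8$ rather than some larger numerical value that an unoptimized split would produce.
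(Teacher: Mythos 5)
Your proof is correct, and it recovers the stated constant $8/\sqrt{\varepsilon}$ exactly. The paper does not prove this lemma at all — it is quoted from Titchmarsh \cite{Ti}, Lemma 4.4 — and your argument (reducing to $F''\geq\varepsilon$, noting that $\{x:|F'(x)|\leq\delta\}$ is an interval of length at most $2\delta/\varepsilon$ by the mean value theorem, applying Lemma \ref{bound1} on the at most two complementary intervals, and calibrating $\delta=2\sqrt{\varepsilon}$) is essentially the standard proof given in that reference.
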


The $\myo$-bounds in the following lemmas relate to the case that $T \to \infty$.

\begin{lemma}[Titchmarsh \cite{Ti}, Lemma 7.2]
\label{sum2}
$$
\sum_{1 \le m < n \le T} \frac{1}{\sqrt{mn} \, \log (n/m)} = \myo \left( T \, \log T \right) \,.
$$
\end{lemma}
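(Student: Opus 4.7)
The plan is to split the double sum according to the relative size of $m$ and $n$. Specifically, I would partition into a close regime $m < n \le 2m$, where $\log(n/m)$ can be arbitrarily small and requires care, and a far regime $n > 2m$, where $\log(n/m) \ge \log 2$ makes the logarithm harmless. I expect the close regime to contribute the full $\myo(T\log T)$ and the far regime to contribute only $\myo(T)$, so the main work is in handling the near-diagonal terms.

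For the close regime, the key input is the elementary inequality $\log(1+x) \ge x/(1+x)$ for $x > 0$. Applying this with $x = (n-m)/m$ yields $\log(n/m) \ge (n-m)/n$; combined with $\sqrt{mn} \ge m$ and $n \le 2m$, this bounds each summand by $2/(n-m)$. Writing $n = m+k$ and summing first in $k$, I obtain a contribution of order
$$\sum_{m \le T} \sum_{1 \le k \le m} \frac{2}{k} = \myo\Bigg(\sum_{m \le T} (1 + \log m)\Bigg) = \myo(T \log T),$$
which is precisely the claimed order.

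For the far regime, $\log(n/m) \ge \log 2$, so each summand is at most $C/\sqrt{mn}$, and the whole double sum is bounded by $C \bigl(\sum_{n \le T} n^{-1/2}\bigr)^{2} = \myo(T)$. Adding the two contributions gives the lemma. The only delicate step is the close-range analysis, because there $\log(n/m)$ can be comparable to $1/m$ on the diagonal, but once one replaces $\log(n/m)$ by the lower bound $(n-m)/n$ the problem reduces to a standard harmonic-type estimate, which is exactly tight enough to yield the claimed bound.
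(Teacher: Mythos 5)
Your proof is correct, and it is essentially the classical argument of Titchmarsh (Lemma 7.2), which the paper cites without reproving: split at $n=2m$, bound the far range by $\bigl(\sum_{n\le T} n^{-1/2}\bigr)^2 = \myo(T)$, and in the near range use $\log(n/m)\ge (n-m)/n$ to reduce to a harmonic sum giving $\myo(T\log T)$. All the individual estimates check out, so there is nothing to add.
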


\begin{lemma}[Ingham \cite{In}, Lemma B.3]
\label{sum4}
$$
\sum_{1 \le m < n \le T} \frac{d(m) \, d(n)}{\sqrt{mn} \, \log (n/m)} = \myo \left( T \, \log^3 T \right) \,.
$$
\end{lemma}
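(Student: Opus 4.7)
The plan is to split the sum at $n = 2m$ and handle the two ranges by different techniques. Let $S_1$ collect the pairs with $n \ge 2m$ and $S_2$ the pairs with $m < n < 2m$.

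For $S_1$, since $\log(n/m) \ge \log 2$ on this range, the sum decouples:
$$ S_1 \le \frac{1}{\log 2} \Bigl( \sum_{m \le T} \frac{d(m)}{\sqrt{m}} \Bigr)^2.$$
Partial summation applied to Dirichlet's divisor estimate $\sum_{n \le T} d(n) = T \log T + \myo(T)$ yields $\sum_{m \le T} d(m)/\sqrt{m} = \myo(\sqrt{T} \log T)$, whence $S_1 = \myo(T \log^2 T)$, which is already of the required quality.

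For $S_2$, I would write $n = m + r$ with $1 \le r \le m$, use the elementary inequalities $\log(1 + r/m) \ge r/(2m)$ (valid for $r/m \in [0,1]$, as $x \mapsto \log(1+x) - x/2$ is non-negative on $[0,1]$) and $\sqrt{mn} \ge m$ to obtain $1/(\sqrt{mn}\,\log(n/m)) \le 2/r$, and then swap the order of summation to reduce the problem to
$$ S_2 \le 2 \sum_{r=1}^{\lfloor T/2 \rfloor} \frac{1}{r} \sum_{r \le m \le T - r} d(m) \, d(m+r). $$
The key input is then the uniform shifted-divisor bound
$$ \sum_{m \le M} d(m) \, d(m+r) = \myo(M \log^2 M) \qquad \text{uniformly in } 1 \le r \le M,$$
which is a classical result of Ingham's type. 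Combined with the harmonic estimate $\sum_{r \le T/2} 1/r = \myo(\log T)$ this gives $S_2 = \myo(T \log^3 T)$, completing the proof.

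The main obstacle is the uniform shifted-divisor bound just quoted. For fixed $r$ it follows from the Dirichlet hyperbola method applied to the identity $d(m) \, d(m+r) = \sum_{a \mid m,\, b \mid m+r} 1$: one reduces to counting $m \le M$ in a single residue class modulo $\mathrm{lcm}(a,b)$ (when $\gcd(a,b) \mid r$, otherwise the count is zero), and organises the resulting double sum over $(a,b)$ via the hyperbola trick. Uniformity in $r$ is the subtle point, since the natural main term involves $\sigma_{-1}(r) = \myo(\log \log r)$ which is harmlessly absorbed into the $\log^2 M$; and for $r$ in the upper tail of $[1, T/2]$ (where the $m$-sum has length $\le M/2$) the cruder Cauchy--Schwarz bound $\sum_{m \le M} d(m)\, d(m+r) = \myo(M \log^3 M)$ is already enough, because the corresponding contribution to $\sum_r 1/r$ is bounded.
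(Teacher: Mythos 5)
Your overall strategy is sound and is in fact essentially the classical one: the paper only cites this lemma from Ingham, and Ingham's own treatment likewise separates the far-apart pairs (where $\log(n/m)$ is bounded below) from the near-diagonal pairs, reducing the latter to the shifted convolution $\sum_m d(m)\,d(m+r)$ weighted by $1/r$. Your bound $S_1 = \myo(T\log^2 T)$ and the reduction of $S_2$ via $\log(1+r/m)\ge r/(2m)$ are both correct.

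The one step that does not survive as written is your ``key input.'' The uniform bound $\sum_{m\le M} d(m)\,d(m+r) = \myo(M\log^2 M)$ is false, and the factor $\sigma_{-1}(r)$ is \emph{not} absorbed into the $\log^2 M$: the main term of the shifted divisor sum is $\tfrac{6}{\pi^2}\,\sigma_{-1}(r)\,M\log^2 M$, and $\sigma_{-1}(r)$ is unbounded (it is as large as a constant times $\log\log r$ along suitable $r$). The correct uniform statement, which the hyperbola method does give for all $1\le r\le M$, is $\sum_{m\le M} d(m)\,d(m+r) = \myo\bigl(\sigma_{-1}(r)\,M\log^2 M\bigr)$. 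Note also that bounding $\sigma_{-1}(r)$ pointwise by $\myo(\log\log T)$ would only yield $S_2 = \myo(T\log^3 T\log\log T)$, which misses the target. The factor must instead be kept and averaged over $r$: writing $\sigma_{-1}(r)=\sum_{d\mid r}1/d$ and putting $r=de$,
$$
\sum_{r\le R}\frac{\sigma_{-1}(r)}{r}
=\sum_{d\le R}\frac{1}{d^{2}}\sum_{e\le R/d}\frac{1}{e}
=\myo(\log R)\,,
$$
which replaces your harmonic estimate $\sum_{r\le T/2}1/r=\myo(\log T)$ and gives $S_2=\myo(T\log^3 T)$ as required. With this correction the argument is complete; the Cauchy--Schwarz fallback for large $r$ is then unnecessary, though harmless.
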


\begin{lemma}[see e.\,g. Ivi\'c \cite{Iv}, Equation (5.24)]
\label{d2sum}
$$
\sum_{n \leq T} d(n)^2 = \frac{1}{\pi^2} \, T \, \log^3 T + \myo \left( T \, \log^2 T \right) \,.
$$
\end{lemma}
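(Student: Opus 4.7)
The plan is to exploit the Ramanujan identity
$$\sum_{n=1}^{\infty} \frac{d(n)^2}{n^s} = \frac{\zeta(s)^4}{\zeta(2s)} \qquad (\re(s) > 1),$$
which one verifies by comparing Euler factors: the local factor at each prime $p$ reduces to
$$\sum_{k \ge 0} (k+1)^2 p^{-ks} = \frac{1+p^{-s}}{(1-p^{-s})^3} = \frac{1-p^{-2s}}{(1-p^{-s})^4},$$
obtained by differentiating the geometric series $\sum_{k \ge 0} p^{-ks} = (1-p^{-s})^{-1}$ twice. Reading off the Dirichlet coefficients on both sides of the global identity then yields the pointwise convolution
$$d(n)^2 = \sum_{e^2 \mid n} \mu(e) \, d_4(n/e^2),$$
where $d_4(n) := \#\{(a_1,a_2,a_3,a_4) : a_1 a_2 a_3 a_4 = n\}$ is the fourfold divisor function.

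Summing over $n \le T$ and interchanging the order of summation gives
$$\sum_{n \le T} d(n)^2 \;=\; \sum_{e \le \sqrt{T}} \mu(e) \, D_4(T/e^2), \qquad D_4(x) := \sum_{n \le x} d_4(n).$$
The second ingredient I would invoke is the classical Dirichlet-type asymptotic
$$D_4(x) \;=\; x \, P_3(\log x) + \myo(x^{1-\delta})$$
for some $\delta > 0$, where $P_3$ is a polynomial of degree $3$ with leading coefficient $1/6$. This is standard: applying Perron's formula to $\zeta(s)^4$ and shifting the contour past the fourth-order pole at $s = 1$ produces a polynomial of degree $3$ in $\log x$, with leading coefficient $1/3! = 1/6$ arising from three differentiations of $x^s/s$ at $s=1$.

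Substituting this asymptotic, expanding $\log^3(T/e^2) = (\log T - 2\log e)^3$, and completing the $e$-sum to infinity (the tail being absorbed into $\myo(T \log^2 T)$), the dominant contribution collapses to
$$\frac{T \log^3 T}{6} \cdot \sum_{e=1}^{\infty} \frac{\mu(e)}{e^2} \;=\; \frac{T \log^3 T}{6} \cdot \frac{1}{\zeta(2)} \;=\; \frac{T \log^3 T}{\pi^2},$$
using $1/\zeta(2) = 6/\pi^2$, which matches the claimed leading term.

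The main technical obstacle is error bookkeeping. Expansion of $(\log T - 2\log e)^3$ creates cross terms of the form $(\log T)^j (\log e)^{3-j}/e^2$ for $j=0,1,2$ weighted by $\mu(e)$, and the lower-order parts of $P_3$ generate analogous cross terms; one must verify that each such sum contributes at most $\myo(T \log^2 T)$. This is routine because $\sum_{e} (\log e)^k/e^2$ converges for every $k \ge 0$, but it has to be done carefully. A cleaner though less elementary alternative is to apply Perron's formula directly to $\zeta(s)^4/\zeta(2s)$, shift the contour past the fourth-order pole at $s=1$, and read off the full main term from the single residue computation $\mathrm{Res}_{s=1}\bigl[\zeta(s)^4 T^s/(s \zeta(2s))\bigr] = T \log^3 T /\pi^2 + \text{(lower order in } \log T)$; this trades the elementary bookkeeping for convexity-type bounds on $\zeta$ along vertical lines needed to control the shifted contour.
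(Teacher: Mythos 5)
The paper does not prove this lemma at all: it is quoted verbatim from the literature (Ivi\'c, Eq.\ (5.24); the result goes back to Ramanujan and to Ingham's Lemma B.2), so there is no internal argument to compare against. Your proof is correct and is essentially the standard textbook derivation. The Euler-factor computation giving $\sum_n d(n)^2 n^{-s} = \zeta(s)^4/\zeta(2s)$ is right, the resulting convolution $d(n)^2 = \sum_{e^2\mid n}\mu(e)\,d_4(n/e^2)$ and the interchange $\sum_{n\le T} d(n)^2 = \sum_{e\le\sqrt{T}}\mu(e)D_4(T/e^2)$ are correct, and the bookkeeping you describe does close: each cross term $(\log T)^j(\log e)^{3-j}$ with $j\le 2$ is killed by the convergence of $\sum_e (\log e)^k e^{-2}$, the tail $\sum_{e>\sqrt{T}}\mu(e)e^{-2}=\myo(T^{-1/2})$ costs only $\myo(\sqrt{T}\log^3 T)$, and $\sum_e \mu(e)e^{-2}=6/\pi^2$ produces the constant $1/\pi^2$. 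The only ingredient you take on faith is $D_4(x)=xP_3(\log x)+\myo(x^{1-\delta})$; this is classical, but note that for the stated error term $\myo(T\log^2 T)$ you do not need a power saving at all --- the weaker bound $D_4(x)=\tfrac{1}{6}x\log^3 x+\myo(x\log^2 x)$, obtainable by an entirely elementary iteration of the hyperbola method (or from $D_4(x)=\sum_{m\le x}d(m)D_2(x/m)$ and Dirichlet's theorem), already suffices and avoids the contour-shift machinery and the attendant bounds on $\zeta$ in the critical strip. So your argument is sound, and if anything can be made more elementary than you suggest.
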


\begin{lemma}[Ingham \cite{In}, Lemma B.1]
\label{d2dsum}
$$
\sum_{n \leq T} \frac{d(n)^2}{n} = \frac{1}{4 \pi^2} \log^4 T + \myo(\log^3 T) \,.
$$
\end{lemma}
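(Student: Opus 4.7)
The plan is to derive this from Lemma \ref{d2sum} via Abel summation (partial summation), which is the standard route for promoting an asymptotic for $\sum_{n \le T} a_n$ into one for $\sum_{n \le T} a_n/n$. Writing $D(u) := \sum_{n \le u} d(n)^2$ and applying partial summation to the sequence $a_n := d(n)^2$ with the smooth weight $f(u) := 1/u$, one obtains
$$
\sum_{n \le T} \frac{d(n)^2}{n}
= \frac{D(T)}{T} + \int_{1}^{T} \frac{D(u)}{u^2} \ du \,.
$$
This identity is exact and requires no estimation; it simply relocates the weight $1/n$ from the summand onto an integral involving the partial sum $D$.

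Next, I would substitute the asymptotic from Lemma \ref{d2sum},
$$
D(u) = \tfrac{1}{\pi^2} \, u \, \log^3 u + \myo(u \log^2 u) \,,
$$
into both terms on the right. The boundary term gives
$$
\frac{D(T)}{T} = \tfrac{1}{\pi^2} \log^3 T + \myo(\log^2 T) \,,
$$
which is already of the admissible error order $\myo(\log^3 T)$. For the integral, separating the main term and the error yields
$$
\int_{1}^{T} \frac{D(u)}{u^2} \ du
= \tfrac{1}{\pi^2} \int_{1}^{T} \frac{\log^3 u}{u} \ du + \myo \bigg( \int_{1}^{T} \frac{\log^2 u}{u} \ du \bigg)
= \tfrac{1}{4\pi^2} \log^4 T + \myo(\log^3 T) \,,
$$
the explicit integral being evaluated by the substitution $v = \log u$.

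Collecting the two contributions gives exactly the claimed asymptotic
$$
\sum_{n \le T} \frac{d(n)^2}{n} = \tfrac{1}{4\pi^2} \log^4 T + \myo(\log^3 T) \,.
$$
There is no real obstacle here: the entire argument is a routine Abel summation on top of Lemma \ref{d2sum}, and the error bookkeeping is straightforward because the main term of $D(u)$ is $u$ times a power of $\log u$, so division by $u^2$ produces an integrand whose primitive is elementary. The only point worth checking is that the error integral $\int_{1}^{T} u^{-1} \log^2 u \ du = \tfrac{1}{3} \log^3 T$ indeed fits inside the stated $\myo(\log^3 T)$ remainder, which it does.
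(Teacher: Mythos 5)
Your argument is correct: the exact Abel-summation identity, the substitution of Lemma \ref{d2sum}, and the error bookkeeping all check out, and $\tfrac14\log^4 T$ is the right main term. The paper itself only cites this lemma from Ingham without proof, but the very same partial-summation computation (in the slightly more general form $\sum_{n\le x} d(n)^2\, n^{-1+i\lambda}$, starting from $D(t)=\sum_{n\le t}d(n)^2$ and Lemma \ref{d2sum}) appears verbatim in the proof of Theorem \ref{zeta4theorem}, so your route is essentially the intended one.
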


\bigskip

\section{On the Characteristic Polynomial of the CUE}

The purpose of this appendix is to show that
\begin{multline}
\label{cue4}
\lim_{N \to \infty} \frac{1}{N^{M^2}} \cdot \fc \left( N; e^{2\pi i\mu_1/N},\hdots,e^{2\pi i\mu_M/N}, e^{2\pi i\nu_1/N},\hdots,e^{2\pi i\nu_M/N} \right) 
\\ = 
\frac{\exp(\sum_{j=1}^{M} \pi i(\mu_j-\nu_j))}{\Delta(2\pi\mu_1,\hdots,2\pi\mu_M) \cdot {\Delta(2\pi\nu_1,\hdots,2\pi\nu_M)}} \cdot \det \left( \frac{\sin \pi(\mu_j-\nu_k)}{\pi(\mu_j-\nu_k)} \right) 
\end{multline}
and
\begin{multline}
\label{cue5}
\lim_{N \to \infty} \frac{1}{N^{M^2}} \cdot \fc \left( N; e^{2\pi i\mu_1/N},\hdots,e^{2\pi i\mu_M/N}, e^{2\pi i\nu_1/N},\hdots,e^{2\pi i\nu_M/N} \right) 
\\ = 
\exp(\tfrac{1}{2} \sum_{j=1}^{M} (\xi_j-\xi_{M+j}))
\cdot
\sum_{\sigma \in \mys_{2M}'} \frac{\exp (\tfrac{1}{2} {\textstyle\sum_{j=1}^{M} (\xi_{\sigma(j)}-\xi_{\sigma(M+j)})})}{\prod_{j,k=1,\hdots,M} (\xi_{\sigma(j)} - \xi_{\sigma(M+k)})} \,,
\end{multline}
where $\Delta(x_1,\hdots,x_M) := \prod_{j < k} (x_k - x_j)$
denotes the Vandermonde determinant,
$\mys_{2M}'$ denotes the subset of permutations $\sigma$
of the set $\{ 1,\hdots,2M \}$ satisfying \linebreak
 $\sigma(1) < \cdots < \sigma(M)$ and $\sigma(M+1) < \cdots < \sigma(2M)$,
$\xi_j := 2 \pi i \mu_j$ for $j=1,\hdots,M$, and
$\xi_{M+j} := 2 \pi i \nu_j$ for $j=1,\hdots,M$.
In particular, by~combining (\ref{cue4}) and (\ref{cue5}),
it follows that
\begin{multline}
\label{cue6}
\sum_{\sigma \in \mys_{2M}'} \frac{\exp (\tfrac{1}{2} {\textstyle\sum_{j=1}^{M} (\xi_{\sigma(j)}-\xi_{\sigma(M+j)})})}{\prod_{j,k=1,\hdots,M} (\xi_{\sigma(j)} - \xi_{\sigma(M+k)})}
\\
= \frac{1}{\Delta(2\pi\mu_1,\hdots,2\pi\mu_M) \cdot {\Delta(2\pi\nu_1,\hdots,2\pi\nu_M)}} \cdot \det \left( \frac{\sin \pi(\mu_j-\nu_k)}{\pi(\mu_j-\nu_k)} \right) \,, 
\end{multline}
which was used at the end of Section~3. \pagebreak[2]

The proofs of (\ref{cue4}) and (\ref{cue5}) use well-known arguments from 
random matrix theory, and are included here mainly for the sake of completeness. 

To prove (\ref{cue4}), we use an argument
from Section 4.1 in \textsc{Forrester} \cite{Fo}.
Recall that the correlation function of order $2M$
of the characteristic poly\-nomial of a~random matrix
from the Circular Unitary Ensemble is defined by
$$
  f(\mu_1,\hdots,\mu_M;\nu_1,\hdots,\nu_M) \\
= \int_{\mathcal{U}_N} \, \prod_{j=1}^{M} \det(U - \mu_j I) \, \overline{\det(U - \nu_j I)} \, \ dU \,.
$$
It is well-known that the probability measure on the space 
of eigenvalue angles induced by the CUE is given by
$$
Z_N^{-1} \prod_{1 \leq j<k \leq N} \left| e^{i\vartheta_k} - e^{i\vartheta_j} \right|^2 \ d\mylebesgue^N(\vartheta_1,\hdots,\vartheta_N)
$$
(see \textsc{Forrester}~\cite{Fo} or \textsc{Mehta}~\cite{Me}),
where $Z_N := (2\pi)^N \, N!$ and $\mylebesgue$ denotes the~Lebesgue measure 
on the interval $[0,2\pi]$. We therefore obtain
\begin{align*}
&\mskip24mu f(e^{i\mu_1},\hdots,e^{i\mu_M};e^{i\nu_1},\hdots,e^{i\nu_M}) \\
&= Z_N^{-1} \int \prod_{j=1}^{M} \, \prod_{k=1}^{N} (e^{i\vartheta_k} - e^{i\mu_j}) \, \prod_{j=1}^{M} \prod_{k=1}^{N} \overline{(e^{i\vartheta_k}- e^{i\nu_j})} 
\\ &\mskip250mu  \,\cdot\, \prod_{1 \leq j<k \leq N} \left| e^{i\vartheta_k} - e^{i\vartheta_j} \right|^2 \ d\mylebesgue^N(\vartheta_1,\hdots,\vartheta_N) \\
&= \frac{Z_N^{-1}}{C(\mu,\nu)} \int \Delta(e^{i\mu_1},\hdots,e^{i\mu_M},e^{i\vartheta_1},\hdots,e^{i\vartheta_N}) 
\\[+5pt] &\mskip135mu \,\cdot\, \Delta(e^{-i\nu_1},\hdots,e^{-i\nu_M},e^{-i\vartheta_1},\hdots,e^{-i\vartheta_N}) \ d\mylebesgue^N(\vartheta_1,\hdots,\vartheta_N) \\[+5pt]
&= \frac{Z_N^{-1}}{C(\mu,\nu)} \int \det \left( \begin{array}{c} e^{ik\mu_j} \\ \hline \\[-10pt] e^{ik\vartheta_j} \end{array} \right)_{jk} 
\,\cdot\, \det \left( \begin{array}{c|c} e^{-ik\nu_l} & e^{-ik\vartheta_l} \end{array} \right)_{kl} \ d\mylebesgue^N(\vartheta_1,\hdots,\vartheta_N) \\[+5pt]
&= \frac{Z_N^{-1}}{C(\mu,\nu)} \int \det \left( \begin{array}{c|c} S_{N+M}(\mu_j,\nu_l) & S_{N+M}(\mu_j,\vartheta_l) \\[+1pt] \hline \\[-11pt] S_{N+M}(\vartheta_j,\nu_l) & S_{N+M}(\vartheta_j,\vartheta_l) \end{array} \right)_{jl}
\ d\mylebesgue^N(\vartheta_1,\hdots,\vartheta_N) \,,
\end{align*}
where
$
\Delta(x_1,\hdots,x_n) := \prod_{1 \leq j<k \leq n} (x_k - x_j)
$
denotes the Vandermonde determinant,
$$
C(\mu,\nu) := \Delta(e^{i\mu_1},\hdots,e^{i\mu_M}) \,\cdot\, \Delta(e^{-i\nu_1},\hdots,e^{-i\nu_M}) \,,
$$
and
$$
   S_{n}(\mu,\nu) 
:= \sum_{k=0}^{n-1} e^{ik(\mu-\nu)}
 = \frac{e^{in(\mu-\nu)} - 1}{e^{i(\mu-\nu)} - 1}
 = e^{i(n-1)(\mu-\nu)/2} \cdot \frac{\sin(n(\mu-\nu)/2)}{\sin((\mu-\nu)/2)} \,.
$$
Carrying out the integration with respect to $\vartheta_N,\hdots,\vartheta_1$ 
as in the proof of Pro\-position 4.2 in \textsc{Forrester} \cite{Fo},
it follows that
\begin{align*}
f(e^{i\mu_1},\hdots,e^{i\mu_M};e^{i\nu_1},\hdots,e^{i\nu_M})
= 
\frac{1}{C(\mu,\nu)} \cdot \det \Big( S_{N+M}(\mu_j,\nu_l) \Big)_{jl} \,.
\end{align*}

Replacing $e^{i\mu_j}$, $e^{i\nu_j}$ with $e^{2\pi i\mu_j/N}$, $e^{2\pi i\nu_j/N}$,
multiplying by $N^{-M^2}$ and letting \mbox{$N \to \infty$}, we therefore obtain
\begin{align*}
&\mskip24mu \lim_{N \to \infty} \Big( N^{-M^2} \, f(e^{2\pi i\mu_1/N},\hdots,e^{2\pi i\mu_M/N};e^{2\pi i\nu_1/N},\hdots,e^{2\pi i\nu_M/N}) \Big) \\
&= \lim_{N \to \infty} \frac{\exp(\sum_{j=1}^{M} \pi i(N+M-1)(\mu_j-\nu_j)/N)}{\Delta(Ne^{2\pi i\mu_1/N},\hdots,Ne^{2\pi i\mu_M/N}) \, \Delta(Ne^{-2\pi i\nu_1/N},\hdots,Ne^{-2\pi i\nu_M/N})}
\\&\mskip300mu \,\cdot\, \det \bigg( \frac{\sin(\pi (N+M)(\mu_j-\nu_l)/N)}{N \sin(\pi (\mu_j-\nu_l)/N)} \bigg) \\
&= \frac{\exp(\sum_{j=1}^{M} \pi i(\mu_j-\nu_j))}{\Delta(2\pi\mu_1,\hdots,2\pi\mu_M) \, \Delta(2\pi\nu_1,\hdots,2\pi\nu_M)} \cdot \det \bigg( \frac{\sin \pi(\mu_j-\nu_l)}{\pi (\mu_j-\nu_l)} \bigg) \,,
\end{align*}
and (\ref{cue4}) is proved.

\pagebreak[1]

To prove (\ref{cue5}), we use the representation
\begin{align*}
& \mskip24mu f ( e^{2\pi i\mu_1},\hdots,e^{2\pi i\mu_M};e^{2\pi i\nu_1},\hdots,e^{2\pi i\nu_M}) \\
&= \exp \big( \tfrac{1}{2}N \sum_{j=1}^{M} (\xi_{j} - \xi_{M+j}) \big) \cdot \sum_{\sigma \in \mys_{2M}'} \frac{\exp \big( \tfrac{1}{2}N \sum_{j=1}^{M} (\xi_{\sigma(j)} - \xi_{\sigma(M+j)}) \big)}{\prod_{j,k=1,\hdots,M} \big( 1 - e^{\xi_{\sigma(M+k)} - \xi_{\sigma(j)}} \big)} \,,
\end{align*}
where $\mathcal{S}_{2M}'$ and $\xi_j$ are defined as below (\ref{cue5}).
See Equation (2.21) in \textsc{Conrey}, \textsc{Farmer}, \textsc{Keating}, 
\textsc{Rubinstein}, and \textsc{Snaith} \cite{CFKRS1}, but note that 
we use a slightly different definition of the characteristic polynomial,
which explains why some signs have changed.

Replacing $e^{2\pi i\mu_j}$, $e^{2\pi i\nu_j}$ with $e^{2\pi i\mu_j/N}$, $e^{2\pi i\nu_j/N}$,
multiplying by $N^{-M^2}$ and letting \mbox{$N \to \infty$}, it follows that
\begin{align*}
&\mskip24mu \lim_{N \to \infty} \Big( N^{-M^2} \, f(e^{2\pi i\mu_1/N},\hdots,e^{2\pi i\mu_M/N};e^{2\pi i\nu_1/N},\hdots,e^{2\pi i\nu_M/N}) \Big) \\
&= \exp \big( \tfrac{1}{2} \sum_{j=1}^{M} (\xi_{j} - \xi_{M+j}) \big) \cdot \sum_{\sigma \in \mys_{2M}'} \frac{\exp \big( \tfrac{1}{2} \sum_{j=1}^{M} (\xi_{\sigma(j)} - \xi_{\sigma(M+j)}) \big)}{\prod_{j,k=1,\hdots,M} \lim\limits_{N \to \infty} \big( N \cdot \big( 1 - e^{(\xi_{\sigma(M+k)} - \xi_{\sigma(j)})/N} ) \big)} \\
&= \exp \big( \tfrac{1}{2} \sum_{j=1}^{M} (\xi_{j} - \xi_{M+j}) \big) \cdot \sum_{\sigma \in \mys_{2M}'} \frac{\exp \big( \tfrac{1}{2} \sum_{j=1}^{M} (\xi_{\sigma(j)} - \xi_{\sigma(M+j)}) \big)}{\prod_{j,k=1,\hdots,M} \big( \xi_{\sigma(j)} - \xi_{\sigma(M+k)} \big)} \,,
\end{align*}
and (\ref{cue5}) is proved.

\bigskip

\section*{Acknowledgements}

I thank Friedrich G\"otze for several valuable discussions.
Furthermore, I thank Steve Gonek as well as an anonymous referee 
for pointing out relevant references from the classical area 
of analytic number theory.

\bigskip

\renewcommand{\refname}{REFERENCES}

\bigskip

\end{document}